\newcommand*{\rom}[1]{\expandafter\@slowromancap\romannumeral #1@}
\theoremstyle{definition}
\newtheorem{fact}{fact}
\newtheorem{thm}[fact]{Theorem}
\newtheorem{lemma}[fact]{Lemma}
\newtheorem{corollary}[fact]{Corollary}
\newtheorem{defini}[fact]{Definition}
\title{The Lost Melody Theorem for Infinite Time Blum-Shub-Smale Machines}
\author{Merlin Carl}
\date{}
\begin{document}

\maketitle
%? - duck

\begin{abstract}
	We consider recognizability for Infinite Time Blum-Shub-Smale machines, a model of infinitary computability introduced in Koepke and Seyfferth \cite{KS}. In particular, we show that the lost melody theorem (originally proved for ITTMs in Hamkins and Lewis \cite{HL}), i.e. the existence of non-computable, but recognizable real numbers, holds for ITBMs, that ITBM-recognizable real numbers are hyperarithmetic and that both ITBM-recognizable and ITBM-unrecognizable real numbers appear at every level of the constructible hierarchy below $L_{\omega_{1}^{\text{CK}}}$ at which new real numbers appear at all.
\end{abstract}

\section{Introduction}

In ordinal computability, a considerable variety of machine models of infinitary computability was defined, including Infinite Time Turing Machines (ITTMs), (weak) Infinite Time Register Machines (ITRMs), Ordinal Turing Machines (OTMs) and Ordinal Register Machines (ORMs) etc. For each of these models, a real number (more generally, a set of ordinals) $x$ is called ``recognizable'' if and only if there is a program $P$ such that, when executed on a machine of the type under consideration, $P$ halts on every input $y$ with output $0$ or $1$ and outputs $1$ if and only if $x=y$. The term was originally defined for ITTMs in Hamkins and Lewis \cite{HL}, where the most prominent statement about ITTM-recognizability was proved, namely the existence of real numbers that are ITTM-recognizable, but not ITTM-computable, so called ``lost melodies''. 

Later on, recognizability was also studied for other machine models. The lost melody theorem was shown to also hold for ITRMs (see \cite{CFKMNW}; see \cite{Ca1} and \cite{Ca2} for a detailed study of ITRM-recognizability) and OTMs with parameters (where computability amounts to constructibility, while recognizability takes us up to $M_{1}$, the canonical inner model for a Woodin cardinal, see \cite{CSW}). On the other hand, it fails for OTMs without parameters and weak ITRMs, see \cite{Ca3}.
%Lost Melodies for ITTMs, ITRMs, pOTMs. None for OTMs, weak ITRMs. The point with "domain enumeration".

%Infinite Time Blum-Shub-Smale-machines. What is different for these.

Infinite Time Blum-Shub-Smale machines, introduced in Koepke and Seyfferth \cite{KS} are register models of infinitary computability that compute with real numbers rather than ordinals as their register contents. 
ITBMs are known to compute exactly the real numbers in $L_{\omega^{\omega}}$ by Welch \cite{We} and Koepke and Morozov \cite{KM}. Moreover, it is known from Koepke and Seyfferth \cite{KS} (Theorem $1$) that an ITBM-program with $n$ nodes either halts in $<\omega^{n+1}$ many steps or not at all. So far, recognizability for ITBMs was not considered. Indeed, as ITBMs are extremely weak in comparison with the other models mentioned above, many of the usual methods for studying recognizability are not available in this setting.

In this paper, we close this gap by (i) showing that the lost melody theorem holds for ITBMs and in particular the ITBM-recognizability of the ITBM-halting number, (ii) showing $L_{\omega_{1}^{\text{CK}}}$ to be the minimal $L$-level containing all ITBM-recognizable real numbers and (iii) that both new ITBM-recognizable and new ITBM-unrecognizable real numbers appear at every index level after $\omega^{\omega}$ below $\omega_{1}^{\text{CK}}$. 

Most arguments in this paper are variants of the corresponding arguments used in the investigation of register models of ordinal computability, specifically Infinite Time Register Machines (ITRMs, see Koepke and Miller \cite{KM}) and weak Infinite Time Register Machines (now called wITRMs, see Koepke \cite{Ko1}). However, due to the weakness of ITBMs, considerable adaptations are required. In this respect, ITBMs turn out to be a kind of mixture between these two machine types with respect to recognizability: Like ITTMs and ITRMs but other than wITRMs, they have lost melodies, even though they are too weak to check whether a given real number codes a well-ordering (which is crucial in the constructions for ITRMs and ITTMs). The real number coding the ITBM-halting problem is ITBM-recognizable, which is also true for ITRMs, but fails for ITTMs. The distribution of the ITBM-recognizable real numbers in G\"odel's constructible hierarchy $L$ is different for ITBMs than for all other machine types considered so far: From $\omega^{\omega}$ up to $\omega_{1}^{\text{CK}}$, new unrecognizable and new recognizable real numbers occur at every level at which new real numbers occur at all, while for ITTMs and ITRMs, there are ``gaps'' in the set of levels at which new recognizable real numbers are constructed. 
%[They have lost melodies, but no well-foundedness check (first time this happens). All recognizables are hyperarithmetical. Halting real is recognizable.]

An ordinal $\alpha$ is called an ``index'' if and only if $L_{\alpha+1}\setminus L_{\alpha}$ contains a real number. By standard fine-structure (see, e.g., Jensen \cite{Je}), $L_{\alpha+1}$ contains a bijection $f:\omega\rightarrow L_{\alpha}$ when $\alpha$ is an index. Moreover, by Theorem $1$ of Boolos and Putnam \cite{BP}, if $\alpha$ is an index, then $L_{\alpha+1}$ contains an ``arithmetical copy'' of $L_{\alpha}$, i.e. a real number coding $L_{\alpha}$. Below, unless indicated otherwise, $p$ will denote Cantor's pairing function. %Moreover, we fix a natural bijection $s$ from $\omega$ to the set of finite sequences of natural numbers that do not start with $0$.
	
\subsection{Infinite Time Blum-Shub-Smale Machines}

Infinite Time Blum-Shub-Smale machines were introduced in Koepke and Seyfferth (\cite{KS}) and then studied further in Koepke and Morozov \cite{KM} and Welch \cite{We}. We briefly recall the definitions and results required for this article.

Like a Blum-Shub-Smale machine, an ITBM has finitely many registers, each of which can store a single real number. An ITBM-program is just an ordinary Blum-Shub-Smale-machine program, i.e. a finite, numerated list of commands for applying a rational functions to the contents of some registers and (i) replacing the content of some register with the result or (ii) jumping to some other program line, depending on whether the value of the function is positive or not; this latter kind of command is called a ``node''. At successor times, an ITBM works like a BSSM, while at limit levels, the active program line is the inferior limit of the sequence of earlier program lines and the content of each register $R$ is the Cauchy limit of the sequence of earlier contents of $R$, provided this sequence converges; if this sequence does not converge for some register, the computation is undefined.

We fix a natural enumeration $(P_{i}:i\in\omega)$ of the ITBM-programs. For an ITBM-program $P$ and a real number $x$, we write $P^{x}$ for the computation of $P$ that starts with $x$ in the first register.

\begin{defini}
A real number $x$ is ITBM-computable if and only if there is an ITBM-program $P$ that starts with $0$s in all of its registers and halts with $x$ in its first register.

We say that a real number $x$ is ITBM-recognizable if and only if there is an ITBM-program $P$ such that, for all real numbers $y$, $P^{y}$ halts with output $1$ if and only if $y=x$ and otherwise, $P^{y}$ halts with output $0$.
\end{defini}

We summarize the relevant results about ITBMs in the following theorem.

\begin{thm}{\label{basic facts}}
	
	(i) (Koepke, Seyfferth, \cite{KS}) If $P$ is an ITBM-program using $n\in\omega$ many nodes and $x$ is a real number, then $P^{x}$ halts in $<\omega^{n+1}$ %sollte es n sein oder (n+1)? 
	many steps or it does not halt at all. In particular, any ITBM-program $P^{x}$ either halts in $<\omega^{\omega}$ many steps or not at all. An ordinal $\alpha$ is ITBM-clockable if and only if $\alpha<\omega^{\omega}$.
	
	(ii) (Koepke, Morozov \cite{KM}, Welch \cite{We}) A real number $x$ is ITBM-computable from the real input $y$ if and only if $x\in L_{\omega^{\omega}}[x]$. In particular, $x$ is ITBM-computable if and only if $x\in L_{\omega^{\omega}}$.
\end{thm}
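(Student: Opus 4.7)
The plan is to prove each part by the methods natural for such recurrence-bounded machine models: induction on program complexity for (i) and a two-direction simulation for (ii). Both statements are cited from the literature, so strictly speaking no proof is owed, but here is how I would reconstruct them.

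For part (i), I would induct on the number $n$ of nodes. For $n=0$ the program contains no branching, so it is straight-line and halts in finitely many steps. For the induction step, consider the node $\nu$ of a program $P$ with $n+1$ nodes that is re-entered most often during an infinite run; between two successive visits to $\nu$ the computation behaves as a program with at most $n$ nodes, so by induction the time between consecutive visits is bounded by $\omega^{n+1}$. At a limit ordinal the active line is the liminf of earlier lines, which together with the Cauchy convergence of register contents forces the configuration at $\nu$ to eventually stabilize if $\nu$ is revisited cofinally often. Hence $\nu$ is entered at most $\omega$ many times before the computation either halts or enters a region in which $\nu$ plays no further role; summing gives $\omega \cdot \omega^{n+1} = \omega^{n+2}$. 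For the clockable direction one exhibits, for each $k < \omega$, a program with $k$ nested loops halting in exactly $\omega^k$ steps, then uses concatenation and constant overhead to realize every $\alpha < \omega^\omega$.

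For part (ii), the forward direction is direct: by (i) a halting computation has length below $\omega^\omega$, and its entire sequence of configurations is definable inside $L_{\omega^\omega}[y]$ by transfinite recursion, so the content of the first register at the halting time lies in $L_{\omega^\omega}[y]$.

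The reverse direction is the main obstacle. I would design a uniform ITBM-procedure that, given $y$ and codes for an ordinal $\alpha < \omega^\omega$ and a pair (formula, parameter) defining an element of $L_\alpha[y]$, outputs that element. The construction proceeds by recursion on the Cantor normal form of $\alpha$: each increase in the exponent of $\omega$ in the CNF is absorbed by adding one more level of nested loops, leveraging exactly the $n$-node hierarchy certified by (i). Given uniform computability of the $L$-hierarchy up to any such $\alpha$, every real $x \in L_{\omega^\omega}[y]$ is produced bit by bit by querying membership of $n$ in the defined set. The delicate part is the register bookkeeping at limit stages: one must arrange that the Cauchy limits of all auxiliary registers equal what the simulation requires, which forces explicit \emph{reset phases} that zero out auxiliary registers before each outer iteration so that spurious limits cannot pollute the outer loop.
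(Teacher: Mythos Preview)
The paper does not give a proof of this theorem at all: it is stated purely as a summary of results from the cited sources (Koepke--Seyfferth for (i), Koepke--Morozov and Welch for (ii)), and the text resumes immediately afterwards with ``As a consequence of (i)\ldots''. Your remark that ``strictly speaking no proof is owed'' is therefore exactly on target, and there is nothing in the paper to compare your reconstruction against.

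On the reconstruction itself: the outline for (ii) is the standard two-direction argument and is fine. For (i), the inductive skeleton is the right one, but the sentence ``the Cauchy convergence of register contents forces the configuration at $\nu$ to eventually stabilize if $\nu$ is revisited cofinally often'' is not correct as stated. Convergence is weaker than stabilization: the register values at successive visits to $\nu$ could be $1/2,\,1/4,\,1/8,\ldots$, converging without ever repeating, so you cannot conclude that the machine enters a literal loop after $\omega$ visits. The Koepke--Seyfferth argument does not rely on configurations repeating; it exploits the $\liminf$ rule for the program counter directly to control how the lowest-numbered recurring node governs the limit behaviour, and then applies the inductive hypothesis to the segments between its activations. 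Your sketch would need that correction to go through.
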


%Beachte: Wie bei ITRMs sind Berechnungen stets ``sicher'', d.h. man kann es dahin bringen, dass sie auf jedem Input halten, indem man einen Halteprobleml\"oser f\"ur die dem fraglichen Programm entsprechende Zahl von Verzweigungspunkten im Programm (``nodes'') vorausschickt.

As a consequence of (i), it is possible to decide, for every ITBM-program $P$, the set $\{x\subseteq\omega:P^{x}\text{ halts}\}$ on an ITBM: Namely, if $P$ uses $n$ nodes, simply run $P^{x}$ for $\omega^{n+1}$ many steps and see whether it has halted up to this point. Thus, if a partial function $f:\mathbb{R}\rightarrow\mathbb{R}$ is ITBM-computable, there is also a total ITBM-computable function $\hat{f}:\mathbb{R}\rightarrow\mathbb{R}$ such that $\hat{f}(x)=f(x)$ whenever $f(x)$ is defined and otherwise $\hat{f}(x)=0$. These properties of ITBMs will be freely used below.

\section{The Lost Melody Theorem for ITBMs}

In this section, we will show that there is a real number $x$ which is ITBM-recognizable, but not ITBM-computable. 

Let $x$ be a real number with the following properties:

\begin{enumerate}
	\item There is a bijection $f:\omega\rightarrow L_{\omega^{\omega}}$ such that $x=\{p(i,j):f(i)\in f(j)\}$. We fix $f$ from now on.
	\item $x\in L_{\omega^{\omega}}+1$. In particular, $x$ is definable over $L_{\omega^{\omega}}$, and in fact definable without parameters (by fine-structure). Let $\phi$ be an $\in$-formula such that $x=\{i\in L_{\omega^{\omega}}:L_{\omega^{\omega}}\models\phi(i)\}$.
	\item $f[\{2i:i\in\omega\}]=\omega^{\omega}$; that is, ordinals are coded exactly by the even numbers.
	\item The real number $c:=\{p(i,j):p(2i,2j)\in x\}$ (which, by definition, is a code of $\omega^{\omega}$) is recursive. 
\end{enumerate}

\begin{lemma}{\label{early decoding}}
Let $c\subseteq\omega$ be such that, for some ordinal $\alpha$ and some bijection $f:\omega\rightarrow\alpha$, we have $c=\{p(i,j):i,j\in\omega\wedge f(i)\in f(j)\}$. Then $f\in L_{\alpha+1}[c]$. 
In particular, if $c$ is recursive and $\alpha>\omega+2$, then $f\in L_{\alpha+1}$.
%wenn c rekursiv ist, wird $L_{\alpha+1}[c]=L_{\alpha+1} sein.
\end{lemma}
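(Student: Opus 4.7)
The natural strategy is to interpret $c$ as a code for the well-order of $\omega$ whose order type is $\alpha$, and to recover $f$ as the (inverse) Mostowski collapse.

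First, define the binary relation $<_c$ on $\omega$ by $i <_c j \iff p(i,j) \in c$. The hypothesis on $c$ says precisely that $(\omega,<_c)$ is a well-order of type $\alpha$ and that $f$ is the unique isomorphism $(\omega,<_c) \to (\alpha,\in)$. The relation $<_c$ is $\Delta_0$ in the parameter $c$, so $(\omega,<_c)$ is available at the very bottom of $L[c]$.

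Next, to place $f$ inside $L_{\alpha+1}[c]$, I would formalise the Mostowski-collapse clause $f(n) = \{f(m) : m <_c n\}$ by transfinite recursion on $<_c$ inside $L[c]$. The crucial observation is that for each $n$ the proper initial segment $f\upharpoonright \{m : m \le_c n\}$ has range bounded by some $\beta_n < \alpha$, so it is coded by a subset of $\omega \times \beta_n$ and already appears at a stage $\gamma_n < \alpha$ of the $L[c]$-hierarchy. Consequently the statement ``$f(n) = \eta$'' can be written $\Sigma_1$ over $L_\alpha[c]$ in the parameter $c$ as ``there exists a partial $g$ (in the model) containing $n$ in its domain, satisfying the collapse clause on its domain and $g(n) = \eta$.'' Hence $f$ itself is $\Sigma_1$-definable over $L_\alpha[c]$ from $c$, which gives $f \in L_{\alpha+1}[c]$.

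For the ``in particular'' clause, recall that a recursive $c \subseteq \omega$ is $\Delta_1$-definable over $L_\omega$. Any occurrence of the predicate $c$ in a defining formula over $L_\beta$ with $\beta \ge \omega+1$ can therefore be replaced by its arithmetical definition without enlarging the set of definable subsets, so $L_\beta[c] = L_\beta$ for all $\beta \ge \omega+1$. Under the hypothesis $\alpha > \omega+2$ this applies at $\beta = \alpha+1$ and yields $f \in L_{\alpha+1}$.

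\textbf{Main obstacle.} The delicate step is the one in the middle: one must verify that the proper initial segments of $f$ genuinely appear inside $L_\alpha[c]$, so that the defining formula stays $\Sigma_1$ (rather than $\Sigma_2$) over $L_\alpha[c]$. Once this bookkeeping is done, the conclusion is automatic from the fact that $\Sigma_1$-definable subsets of $L_\alpha[c]$ lie in $L_{\alpha+1}[c]$, and the ``in particular'' clause is then a routine absoluteness remark.
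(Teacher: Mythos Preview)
Your proposal is correct and follows essentially the same route as the paper: both define $f$ over $L_{\alpha}[c]$ via a $\Sigma_{1}$ formula asserting the existence of a partial witness to the Mostowski collapse below a given point, and both derive the ``in particular'' clause from the fact that a recursive $c$ already lies in $L_{\omega+1}$. The only cosmetic difference is that the paper takes the inverse sequence $(a_{\iota}:\iota\leq\beta)$ with $a_{\iota}=f^{-1}(\iota)$ as its witness rather than your forward partial collapse $g$, and explicitly splits into the limit and successor cases for $\alpha$ --- a case distinction your ``bookkeeping'' remark implicitly covers.
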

\begin{proof}
We need to show that $f$ is definable over $L_{\alpha}[c]$. First suppose that $\alpha$ is a limit ordinal. 
Then $f$ is defined as follows. For $i\in\omega$, we have $f(i)=\beta$ if and only if there is a sequence $(a_{\iota}:\iota\leq\beta)$ of natural numbers with the following properties: 

\begin{enumerate}
\item For all $i\in\{a_{\iota}:\iota\leq\beta\}=:A$, and all $j\in\omega$, if $p(j,i)\in c$, then $j\in A$
	
\item for all $\iota,\xi\leq\beta$, we have $\iota<\xi$ if and only if $p(a_{\iota},a_{\xi})\in c$

\item $a_{\beta}=i$
\end{enumerate}

When $\alpha$ is a limit ordinal, these sequences will be contained in $L_{\alpha}$, so the above provides a definition of $f$ over $L_{\alpha}$. When $\alpha$ is a successor ordinal, the above works up to the last limit ordinal before $\alpha$ and then the remaining values of $f$ can be defined separately explicitly; we skip the details of this case.
	
%dieser beweis sollte wohl in den anhang, wenn es ein CiE-artikel wird.
The second claim now follows from the first as a recursive real number $c$ is contained in $L_{\omega+1}$, so that $L_{\alpha+1}[c]=L_{\alpha+1}$ when $\alpha>\omega+2$.
\end{proof}

\begin{lemma}{\label{code existence}}
	There is a real number $x$ satisfying (1)-(4) above.
\end{lemma}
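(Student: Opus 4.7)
The plan is to build the bijection $f$ in two halves. On the even numbers I would place an explicit recursive enumeration of $\omega^{\omega}$; on the odd numbers I would fill in $L_{\omega^{\omega}}\setminus\omega^{\omega}$ using the canonical $L$-well-ordering. The recursive half secures condition (4), and the definable extension ensures $f$ --- and hence $x$ --- is definable over $L_{\omega^{\omega}}$, giving conditions (1), (2), and (3).

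For the concrete enumeration of $\omega^{\omega}$, I would exploit Cantor normal form: every $\alpha<\omega^{\omega}$ has a unique representation $\alpha=\omega^{n_{1}}k_{1}+\cdots+\omega^{n_{r}}k_{r}$ with $n_{1}>\cdots>n_{r}$ and $k_{i}\geq 1$, which can be coded as a single natural number via a fixed primitive-recursive coding of finite sequences. Let $h:\omega\to\omega^{\omega}$ be the resulting bijection. Comparing two Cantor normal forms is primitive recursive, so the relation $\{(i,j):h(i)\in h(j)\}$ is recursive. Setting $f(2i):=h(i)$ therefore makes $c=\{p(i,j):p(2i,2j)\in x\}$ recursive as required by (4), while the restriction of $f$ to the even numbers has range exactly $\omega^{\omega}$ as required by (3).

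Next I would extend $f$ to the odd numbers. Since $L_{\omega^{\omega}}$ is countable and carries a canonical parameter-free $\Sigma_{1}$-definable well-ordering $<_{L}$, I would let $f(2i+1)$ be the $<_{L}$-$i$-th element of $L_{\omega^{\omega}}\setminus\omega^{\omega}$. Both $h$ (being recursive, hence absolute) and $<_{L}$ are definable over $L_{\omega^{\omega}}$ without parameters, so $f$ itself is parameter-free definable over $L_{\omega^{\omega}}$, and consequently so is $x=\{p(i,j):f(i)\in f(j)\}$. This yields $x\in L_{\omega^{\omega}+1}$, confirming (2); condition (1) holds by construction.

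The only real work is hidden in the construction of $h$, which requires a uniform recursive bookkeeping of Cantor normal forms; this is entirely standard but must be carried out carefully so that both the coding and the ordinal comparison are genuinely primitive recursive. The parameter-freeness required for (2) could alternatively be invoked as a consequence of fine structure --- since $\omega^{\omega}$ is an index, every real in $L_{\omega^{\omega}+1}$ is parameter-free definable over $L_{\omega^{\omega}}$ --- but the explicit construction above already delivers it directly.
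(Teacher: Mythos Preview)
Your overall strategy---place a recursive copy of $\omega^{\omega}$ on the even indices and the non-ordinals of $L_{\omega^{\omega}}$ on the odd indices---is exactly the paper's approach, and your treatment of the even half via Cantor normal form is in fact more explicit than the paper's (which picks an arbitrary recursive code for $\omega^{\omega}$ and then appeals to the preceding ``early decoding'' lemma to show that the associated decoding function $h:\omega\to\omega^{\omega}$ is definable over $L_{\omega^{\omega}}$).

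There is, however, a genuine gap in the odd half. You set $f(2i+1)$ to be the $<_{L}$-$i$th element of $L_{\omega^{\omega}}\setminus\omega^{\omega}$, but the canonical well-ordering $<_{L}$ restricted to $L_{\omega^{\omega}}\setminus\omega^{\omega}$ does not have order type $\omega$. Any element first constructed at level $\alpha+1$ is $<_{L}$-preceded by every element of $L_{\alpha+1}$; in particular, a non-ordinal such as $\{\omega\}\in L_{\omega+2}\setminus L_{\omega+1}$ already has infinitely many $<_{L}$-predecessors among the non-ordinals (namely, all the non-ordinals in $L_{\omega+1}$, of which there are infinitely many). Hence your map $i\mapsto f(2i+1)$ enumerates only a proper initial segment of $L_{\omega^{\omega}}\setminus\omega^{\omega}$, $f$ fails to be surjective onto $L_{\omega^{\omega}}$, and condition (1) is not met. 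The paper avoids this by first invoking fine structure to obtain a bijection $g:\omega\to L_{\omega^{\omega}}$ in $L_{\omega^{\omega}+1}$ and then letting $f(2i+1)$ be the $g$-image of the $i$th natural number whose $g$-image is not an ordinal; replacing $<_{L}$ by such a $g$ repairs your argument with no other changes.
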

\begin{proof}
	It is clear that the Skolem hull of the empty set in $L_{\omega^{\omega}}$ is equal to $L_{\omega^{\omega}}$. By standard fine-structure (see \cite{Je}), this implies that $L_{\omega^{\omega}+1}$ contains a bijection $g:\omega\rightarrow L_{\omega^{\omega}}$. 

	%Fix a bijection $h:\omega\rightarrow\omega^{\omega}$, given by $h(s(a_{1},...,a_{k}))=\omega^{k}\cdot a_{1}+\omega^{k}\cdot a_{1}+...+\omega^{0}\cdot a_{k}$.
	Moreover, as $\omega^{\omega}<\omega_{1}^{\text{CK}}$, there is a recursive code $c$ for $\omega^{\omega}$. Using Lemma \ref{early decoding}, a function $h:\omega\rightarrow\omega^{\omega}$ such that $c=\{p(i,j):h(i)\in h(j)\}$ is definable over $L_{\omega^{\omega}}$. 
	
	%problem: woher wissen wir, dass der rekursive code c aus einer �ber der relevanten L-stufe (hier: L_{\omega^{\omega}}) definierbaren bijektion entstanden ist?
	%ist die decodierung eines codes f�r \alpha �ber L_{\alpha} stets definierbar?
	%ansatz: d(i)=\beta falls eine folge nat�rlicher zahlen der l�nge \beta existiert, in dieser das i
	%an letzter stelle steht, die nat�rlichen zahlen im sinne von c ein anfangsst�ck bilden und "richtig" geordnet sind.
	%sind diese folgen immer da? sicher, wenn es eine limesstufe ist. ansonsten evtl. die verbleibenden nachfolgerstufen von hand "draufmontieren". das geht ja im rahmen einer definition.
	
	%CHECK THE DETAILS!!! IST DAS JETZT SO, WIE ES SEIN SOLLTE?
	Now define $f:\omega\rightarrow L_{\omega^{\omega}}$ by letting, for $i\in\omega$, $f(2i)=h(i)$ 
	%be the $h$-image of the $i$th natural number whose $g$-image is an ordinal 
	and letting $f(2i+1)$ be the $g$-image of the $i$th natural number whose $g$-image is not an ordinal. Since $g$ is definable over $L_{\omega^{\omega}}$, so is $f$. 
%[das reicht nicht, du musst das bild der geraden zahlen noch rekursiv, genauer gleich $c$ kriegen! dazu muss man noch einmal umpermutieren.] 
Now let $x:=\{p(i,j):i,j\in\omega\wedge f(i)\in f(j)\}$. Then $x$ is definable over $L_{\omega^{\omega}}$ and by definition as desired.
	%weiterer ansatz: schicke 2i auf die i-te natuerliche zahl, die von g auf eine ordinalzahl abgebildet wird; schicke (2i+1) auf die i-te natuerliche zahl, die von g nicht auf eine ordinalzahl abgebildet wird. 
    %da g definierbar ist, ist $f$ es auch.
    %da x aus f definierbar ist, ist x es auch.
\end{proof}

We now show that $x$ is a lost melody for ITBMs.

\begin{lemma} (Truth predicate evaluation)
	Given a real number $y$ coding the structure $(Y,R)$ (with $Y$ a set, $R\subseteq Y\times Y$ a binary relation on $Y$, $g:\omega\rightarrow Y$ a bijection and $y=\{p(i,j):(f(i),f(j))\in Y\}$)
	there is an ITBM-program $P_{\text{truth}}$ that compute the truth predicate over $(Y,R)$ (i.e., for each $\in$-formula $\phi$ and all $i_{1},...,i_{n}\in\omega$, $P_{\text{truth}}$ will decide, on inputs $y$ and $(\phi,(i_{1},...,i_{n}))$, whether or not $(Y,R)\models\phi(g(i_{1}),...,g(i_{n}))$.
	% and $n\in\omega$, the $\Sigma_{n}$-truth predicate for propositions in the language of set theory for $(Y,R)$ is ITBM-computable in $y$.
\end{lemma}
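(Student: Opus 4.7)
The plan is to reduce the lemma to Theorem~\ref{basic facts}(ii). The full truth predicate
\[
T := \{\mathrm{code}(\phi,i_{1},\ldots,i_{n}) : (Y,R)\models\phi(g(i_{1}),\ldots,g(i_{n}))\}
\]
is a single real number definable from $y$, and I will argue that it lies well inside $L_{\omega^{\omega}}[y]$. Once that is established, Theorem~\ref{basic facts}(ii) produces an ITBM-program that outputs $T$ on input $y$, and the desired $P_{\mathrm{truth}}$ is obtained by appending a trivial bit-lookup.

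The first step is to verify $T\in L_{\omega^{\omega}}[y]$. Identifying $Y$ with $\omega$ via $g$, the structure becomes $(\omega,R')$ with $R'=\{(i,j):p(i,j)\in y\}$, which is $\Delta_{0}$-definable from $y$. Tarski satisfaction is then defined by the usual recursion on formula complexity: the atomic predicate is definable from $R'$, and given the satisfaction predicate for formulas of quantifier depth $\leq n$, the predicate for depth $\leq n+1$ is definable from it together with $R'$ by scanning $\omega$ for witnesses to the quantifiers. Collecting the countably many stages into a single real number is an $\omega$-length construction carried out at very low levels of the $L[y]$-hierarchy, so $T$ lies well inside $L_{\omega^{\omega}}[y]$.

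Finally, Theorem~\ref{basic facts}(ii) supplies an ITBM-program $P_{0}$ with $P_{0}^{y}$ halting with $T$ in its first register; $P_{\mathrm{truth}}$ is then obtained by appending a short routine that extracts $(\phi,\vec{i})$ from the input, computes its G\"odel code $k$, and outputs the $k$-th bit of $T$. The main subtlety I expect is that Theorem~\ref{basic facts}(ii) is stated as a pointwise ``iff'', while what I actually need is that $P_{0}$ can be chosen uniformly in $y$. This is not a serious obstacle: the proofs in Koepke--Morozov~\cite{KM} and Welch~\cite{We} produce such a program by simulating the $L[y]$-construction stage by stage and extracting the unique real defined by a fixed $\in$-formula, so uniformity in $y$ is automatic once the defining formula for $T$ is fixed, which it is.
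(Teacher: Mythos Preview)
Your argument is correct, but it takes a different route from the paper. The paper does not appeal to the global characterization in Theorem~\ref{basic facts}(ii) at all; instead it cites two specific, explicitly uniform results from Koepke--Morozov~\cite{KM}: Proposition~2.7, which gives a single ITBM-program computing the Turing jump $y\mapsto y'$, and the iteration lemma, which gives a single ITBM-program computing $y\mapsto y^{(\omega)}$. Since the full first-order truth predicate for $(Y,R)$ is (classically) recursive in $y^{(\omega)}$, composing these yields $P_{\text{truth}}$ directly. Your approach instead shows $T\in L_{\omega^{\omega}}[y]$ by the Tarski recursion and then invokes the characterization theorem, which forces you to argue separately that the program witnessing Theorem~\ref{basic facts}(ii) can be chosen uniformly in $y$. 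That uniformity claim is true (and you are right that it is visible in the proofs in~\cite{KM} and~\cite{We}), but it is an extra step that the paper's approach sidesteps entirely by citing lemmas that are already stated uniformly. The trade-off is that your argument is more conceptual and would generalize immediately to any real definable at a fixed low level of $L[y]$, whereas the paper's argument is more concrete and requires no meta-discussion of uniformity.
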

\begin{proof}
	%By exhaustive search and induction. DO IT!!!
	%``morally'' contained in Koepke-Morozov.
	By Proposition 2.7 of Koepke and Morozov \cite{KM}, there is an ITBM-program $P$ such that, for each input $y\subseteq\omega$, $P^{y}$ computes the (classical) Turing-jump of $y$. By the iteration lemma in \cite{KM}, there is also an ITBM-program $H$ that computes the $\omega$-th iteration $y^{(\omega)}$ of the Turing-jump of $y$. But it is clear that the truth predicate for $(Y,R)$ is recursive in $y^{(\omega)}$, and a fortiori ITBM-computable. 
	
	%koepke-morozov, prop. 2.7: zu jeder reellen zahl kann man den Turing-Jump berechnen. Also auch jede endliche Iteration. Das sollte reichen, um beschr\"ankte Wahrheitspr\"adikate zu berechnen.
\end{proof}

\begin{corollary} (Identification of natural numbers)
	There is a program that identifies the natural numbers coding natural numbers in a real code $r$ for a structure $(A,R)$. Moreover, there is a program $P_{\text{id}}$ that, for each natural number $k$, identifies the natural number $i$ that codes $k$ in the sense of $r$, provided such $i$ exists.
\end{corollary}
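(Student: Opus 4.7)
The plan is to reduce both parts of the corollary to the truth-predicate program $P_{\text{truth}}$ supplied by the preceding lemma.

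For the first claim, I would fix once and for all an $\in$-formula $\phi_{\N}(v)$ expressing ``$v$ is a finite von Neumann ordinal''---for instance, $v$ is transitive, $\in$-linearly ordered, and every nonempty subset of $v$ has an $\in$-maximum. The program identifying the ``natural-number indices'' simply invokes $P_{\text{truth}}$ on input $(r,(\phi_{\N},i))$ and returns its verdict; the output is $1$ exactly when $g(i)$ is a natural number in the sense of $(A,R)$.

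For the second claim I would build $P_{\text{id}}$ by an external induction on $k$, driven by a counter register. On input $(r,k)$, the first step is to locate $i_{0}$ by searching for the least $i\in\omega$ with $P_{\text{truth}}(r,(\psi_{0},i))=1$, where $\psi_{0}(v)$ abbreviates $\forall w\,(w\notin v)$. Next, in a loop executed $k$ times, given the stored index $i_{n}$, the program would search for the least $j$ with $P_{\text{truth}}(r,(\psi_{\text{succ}},(j,i_{n})))=1$, where $\psi_{\text{succ}}(v,w)$ is $\forall u\,(u\in v\leftrightarrow u\in w\vee u=w)$; it would then overwrite the index register with $j$ and increment the counter. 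After $k$ rounds, the register holding $i_{k}$ is output.

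Only a constant number of registers is needed throughout (the input $r$, the counter $n$, the current index, and a workspace), which matters since an ITBM has only finitely many registers. Each call to $P_{\text{truth}}$ halts in fewer than $\omega^{\omega}$ steps by Theorem~\ref{basic facts}(i), so each of the finitely many unbounded searches terminates after some finite number of trials and the overall program halts. The only point requiring care---and the main ``obstacle'', though a mild one---is to verify that $\phi_{\N},\psi_{0},\psi_{\text{succ}}$ correctly pick out the intended objects in the structures $(A,R)$ to which the corollary will actually be applied; this is routine whenever $(A,R)$ is a transitive initial segment of $L$ containing the relevant finite ordinals, which covers every application envisaged in the sequel.
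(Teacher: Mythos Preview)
Your proposal is correct and follows essentially the same approach as the paper: both parts are reduced to the truth-predicate program $P_{\text{truth}}$ from the preceding lemma. The only difference is cosmetic: for the second claim the paper fixes a recursive map $k\mapsto\psi_{k}$ with $\psi_{k}(x)\leftrightarrow x=k$ and performs a \emph{single} unbounded search for the least $i$ with $(A,R)\models\psi_{k}(g(i))$, whereas you keep two fixed formulas $\psi_{0},\psi_{\text{succ}}$ and perform $k$ successive searches; either variant works equally well here. One small technical remark: your suggested $\phi_{\N}$ quantifies over ``every nonempty subset of $v$'', which is not first-order as written---replace it by a standard $\in$-definition of finite ordinal (e.g.\ ``$v$ is an ordinal and neither $v$ nor any element of $v$ is a limit'') and the argument goes through unchanged.
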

\begin{proof}
	The first part is an immediate consequence of the last lemma. 
	
	For the second part, note that there is a recursive function that maps each $k\in\omega$ to an $\in$-formula $\psi_{k}$ such that $\psi_{k}(x)$ holds if and only if $x=k$.\footnote{For example, we can take $\psi_{0}(x)$ to be $x\neq x$ and then let $\psi_{k+1}(x)$ be $\forall{y}(y\in x\leftrightarrow(\exists{z}(\psi_{k}(z)\wedge y\in z)\vee\psi_{k}(y)))$.} But then, searching for the natural number coding $k$ is an easy application of the last lemma: Just check successively, for each $i\in\omega$, whether $\psi_{k}(g(i))$ holds in $(A,R)$ and output the first $i\in\omega$ for which it is true.
	
	%MORE HERE: endliche suche ist unproblematisch, man testet die 
	
	%Find and store $0$, $1$ etc. successively, using the ``halving trick'' above. Then just look it up there. The second part is even easier.
	
	%Sollte auch direkt mit dem Turing-jump gehen.
	%man kriegt auch den $\omega$-ten jump. damit kriegt man wohl direkt ein vollst\"andiges wahrheitspr\"adikat?
\end{proof}

\begin{lemma}
	$x$ is not ITBM-computable.
\end{lemma}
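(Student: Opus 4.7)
The plan is to argue by contradiction, combining Theorem~\ref{basic facts}(ii) with Tarski's undefinability theorem. Suppose $x$ is ITBM-computable. Then by Theorem~\ref{basic facts}(ii), $x\in L_{\omega^{\omega}}$, and the task is to derive a contradiction from the fact that, by property~(1), $x$ is a real code for the entire structure $(L_{\omega^{\omega}},\in)$ via the bijection $f$.

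The key step is to apply the truth predicate evaluation lemma to this code. Since $x$ enumerates $(L_{\omega^{\omega}},\in)$ through $f$, the lemma yields an ITBM-program that, on input $x$, computes the full truth predicate
\[
T=\{(\phi,(i_{1},\dots,i_{n})):(L_{\omega^{\omega}},\in)\models\phi[f(i_{1}),\dots,f(i_{n})]\}.
\]
Composing this program with an ITBM-program that outputs the (hypothetically computable) real $x$ from empty input shows that $T$ is itself ITBM-computable with no oracle. Applying Theorem~\ref{basic facts}(ii) in the other direction then forces $T\in L_{\omega^{\omega}}$.

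Restricting $T$ to closed formulas produces the complete first-order theory $\mathrm{Th}(L_{\omega^{\omega}})$ of $(L_{\omega^{\omega}},\in)$, which is then also an element of $L_{\omega^{\omega}}$. Using this set as a parameter, truth for $(L_{\omega^{\omega}},\in)$ becomes definable over $L_{\omega^{\omega}}$ by the formula $\ulcorner\phi\urcorner\in\mathrm{Th}(L_{\omega^{\omega}})$; since $L_{\omega^{\omega}}$ is certainly rich enough to code its own syntax, this contradicts Tarski's undefinability theorem, completing the proof.

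The main obstacle I anticipate is being careful about the translation between the coded truth predicate (which concerns formulas about $f(i)$ for $i\in\omega$) and the genuine theory of $L_{\omega^{\omega}}$. However, the restriction of $T$ to closed formulas really is $\mathrm{Th}(L_{\omega^{\omega}})$: a closed formula has no free variables to be instantiated via $f$, so the coding plays no role for that part of $T$. Everything else reduces to citing lemmas already available.
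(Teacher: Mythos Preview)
Your argument is correct, but it takes a different route from the paper's. Both proofs pivot on the truth-predicate-evaluation lemma applied to the code $x$ for $(L_{\omega^{\omega}},\in)$, and both ultimately rest on a diagonalisation; the difference is which one. The paper observes that the ITBM halting set $H=\{i:P_{i}\text{ halts}\}$ is $\Sigma_{1}$ over $L_{\omega^{\omega}}$ (since every halting ITBM-computation lies in $L_{\omega^{\omega}}$ by Theorem~\ref{basic facts}(i)), hence ITBM-computable from $x$ via truth evaluation; as $H$ cannot itself be ITBM-computable by the standard diagonal argument, neither can $x$. You instead push truth evaluation all the way to $\text{Th}(L_{\omega^{\omega}})$, place it inside $L_{\omega^{\omega}}$ using Theorem~\ref{basic facts}(ii), and invoke Tarski. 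Your route is a little more abstract and needs the (easy) observation that $(L_{\omega^{\omega}},\in)$ interprets enough arithmetic for Tarski's theorem to apply; the paper's route is more concrete and has the incidental benefit of isolating $H$, which is immediately reused in the subsequent theorem showing that $H$ is ITBM-recognizable.
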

\begin{proof}
	Since ITBM-halting times are bounded by $\omega^{\omega}$, $L_{\omega^{\omega}}$ contains all halting ITBM-computations. Thus, the statement that the $i$th ITBM-program $P_{i}$ halts is $\Sigma_{1}$ over $L_{\omega^{\omega}}$. By bounded truth predicate evaluation, the set $H$ of $i\in\omega$ for which $P_{i}$ halts - i.e., the ITBM halting set - is thus ITBM-computable from $x$. By Koepke-Morozov (transitivity lemma), $H$ would thus be ITBM-computable if $x$ was ITBM-computable. Thus, $x$ is not ITBM-computable.
\end{proof}

\begin{lemma}{\label{x recog}}
	$x$ is ITBM-recognizable.
\end{lemma}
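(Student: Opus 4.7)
The plan is to build an ITBM-program $P_{\text{rec}}$ that, on input $y$, verifies $y=x$ by (a) checking that the recursive ``ordinal skeleton'' of $y$ is correct, (b) running the truth-predicate program on $y$ to access the first-order theory of $(\omega,R_{y})$, and (c) confirming that $y$ is self-consistent with the defining formula $\phi$ from property (2). The crucial point is that one never needs to test well-foundedness of $(\omega,R_{y})$ directly; it will be forced by first-order properties together with the externally pinned-down ordinal part.

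Concretely, $P_{\text{rec}}$ proceeds in three phases. First, it verifies the arithmetic condition $c_{y}=c$, where $c_{y}:=\{p(i,j):p(2i,2j)\in y\}$ and $c$ is the recursive code of $\omega^{\omega}$ fixed in property (4); since $c$ is recursive this is a $\Pi^{0}_{1}$ condition on $y$ and is easily ITBM-computable via the iterated Turing jump of \cite{KM}. Second, $P_{\text{rec}}$ invokes the truth-predicate program from the preceding lemma to decide $\in$-formulas in $(\omega,R_{y})$ and checks both that $(\omega,R_{y})\models V=L$ and that, for each $n\in\omega$, ``$n$ is an ordinal'' holds in $(\omega,R_{y})$ exactly when $n$ is even. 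Third, for each $n\in\omega$ it uses $P_{\text{id}}$ to find the element $i_{n}\in\omega$ coding the natural number $n$ in $(\omega,R_{y})$ and checks $n\in y\iff(\omega,R_{y})\models\phi(i_{n})$. The program accepts iff every check succeeds.

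Correctness in the direction $y=x$ is immediate from properties (1)--(4) of $x$. For the converse, the phase-two checks pin down $(\omega,R_{y})$ up to isomorphism: the condition $c_{y}=c$ forces the ordinals of $(\omega,R_{y})$ to form an externally well-ordered copy of $\omega^{\omega}$, while $V=L$ expresses that every element of $(\omega,R_{y})$ lies in $L_{\alpha}$ for some ordinal $\alpha$ of $(\omega,R_{y})$. A transfinite induction on these (externally well-founded) ordinals, using absoluteness of the $L$-construction, shows $L_{\alpha}^{(\omega,R_{y})}\cong L_{\alpha}$ for every $\alpha<\omega^{\omega}$; hence $(\omega,R_{y})\cong(L_{\omega^{\omega}},\in)$. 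The phase-three check together with property (2) then yields $y=\{n\in\omega:L_{\omega^{\omega}}\models\phi(\bar{n})\}=x$.

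The main obstacle, as flagged in the introduction, is that ITBMs cannot directly test well-foundedness of a coded structure. The plan sidesteps this by extracting well-foundedness as a \emph{consequence} of the first-order condition $V=L$ combined with the externally fixed ordinal structure $c_{y}=c$. The delicate part of the correctness argument is thus the inductive verification that the internal $L$-hierarchy of $(\omega,R_{y})$ agrees externally with the true $L$-hierarchy up to $\omega^{\omega}$, which is what takes the place of a direct well-foundedness test.
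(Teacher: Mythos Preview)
Your proposal is correct and follows essentially the same route as the paper: the three phases you describe (check that the ordinal part of $y$ agrees with the fixed recursive code $c$, check via truth-predicate evaluation that $(\omega,R_{y})$ satisfies $V=L$ with ordinals coded exactly by the even numbers, and then recompute the $\phi$-defined set inside the coded structure and compare with $y$) are exactly the checks the paper performs, merely in a slightly different order. Your correctness discussion is in fact more explicit than the paper's, which simply asserts ``we know that $y$ codes $L_{\omega^{\omega}}$'' after the first three checks; your inductive argument along the externally well-founded ordinals, using absoluteness of the $L$-construction, is the justification the paper leaves implicit.
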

\begin{proof}
	Let the input $y$ be given.
	First, use truth predicate evaluation to check whether $y$ codes a model of $V=L$. If not, output $0$.
	
	If yes, check whether, in $y$, $i\in\omega$ codes an ordinal if and only if $i$ is even. This can be determined by computing $y^{(\omega)}$ in which the set $s$ of natural numbers coding ordinals in the sense of $y$ is recursive, and then checking whether the Turing program that (in the oracle $s$) runs through $\omega$ and halts once it has found an odd number in $s$ or an even number not in $s$ halts.
%Again, this works by truth predicate evaluation for the formula ``$z$ is an ordinal'', applied to each natural number in turn as in the proof of bounded truth predicate evaluation (shrink everything to an interval of the form $(\frac{1}{2^n},\frac{1}{2^{n+1}})$, shrink by $\frac{1}{2}$ more each time). %[MORE HERE!!!] 
	If not, return $0$. Otherwise, continue. 
	
	Check whether $\{p(i,j):p(2i,2j)\in y\}=c$. This is possible as $c$ is recursive, so we can just compare. If not, return $0$. Otherwise, we know that $y$ codes $L_{\omega^{\omega}}$, and we only need to check whether it is the ``right'' code. To do this, we continue as follows:
	
	Using bounded truth predicate evaluation and identification of natural numbers, compute the set $s$ of natural numbers $i$ such that $L_{\omega^{\omega}}\models\phi(i)$. At this point, we konw that $s=x$. Now simply compare $s$ to $y$. If they are equal, output $1$, otherwise output $0$.
\end{proof}

We note for later use that the proof of lemma \ref{x recog} shows more: 

\begin{corollary}{\label{many recognizable codes}}
	Let $\alpha<\omega_{1}^{\text{CK}}$ be an index. Then $L_{\alpha}$ has an ITBM-recognizable code $c$. In fact, $c$ can be taken to be contained in $L_{\alpha+1}$.
\end{corollary}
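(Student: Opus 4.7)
The proof is a direct generalization of Lemma \ref{x recog}: the argument given there used nothing special about $\omega^{\omega}$ beyond the fact that it is an index below $\omega_{1}^{\text{CK}}$. The plan is to produce, for the given $\alpha$, an analogue of the real number $x$ and then run the same recognition procedure.

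For the construction, I would copy Lemma \ref{code existence} verbatim with $\alpha$ in place of $\omega^{\omega}$. Since $\alpha<\omega_{1}^{\text{CK}}$, there is a recursive code $c'$ of $\alpha$, and Lemma \ref{early decoding} produces from it an $L_{\alpha}$-definable bijection $h:\omega\rightarrow\alpha$. Since $\alpha$ is an index, fine-structure (as cited in the introduction) provides a bijection $g:\omega\rightarrow L_{\alpha}$ in $L_{\alpha+1}$; as in Lemma \ref{code existence}, one sees that $g$, and hence the interleaved bijection $f:\omega\rightarrow L_{\alpha}$ obtained by putting $h(i)$ at $2i$ and the $g$-image of the $i$th non-ordinal at $2i+1$, is definable over $L_{\alpha}$ without parameters. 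Setting $c:=\{p(i,j):f(i)\in f(j)\}$ then yields a code of $L_{\alpha}$ that lies in $L_{\alpha+1}$ and is definable over $L_{\alpha}$ by some $\in$-formula $\phi_{c}$, so that $c=\{i\in\omega:L_{\alpha}\models\phi_{c}(i)\}$. The analogues of conditions (1)--(4) in the definition of $x$ are enforced by design.

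The recognition algorithm for $c$ is the one used in Lemma \ref{x recog}, with $\alpha$ replacing $\omega^{\omega}$ and $\phi_{c}$ replacing $\phi$. Given $y$, use the truth-predicate routine to check that $y$ codes a model of $V=L$; use the Turing-jump computation to verify that the even numbers are exactly the naturals coding ordinals in $y$; compare the even-even restriction $\{p(i,j):p(2i,2j)\in y\}$ to the fixed recursive code $c'$ of $\alpha$. After these steps $y$ necessarily codes $L_{\alpha}$. Finally, compute $s:=\{i\in\omega:y\models\phi_{c}(i)\}$ via truth-predicate evaluation and the identification-of-natural-numbers corollary, and accept iff $s=y$. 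Each step is already verified to be ITBM-realizable in the proof of Lemma \ref{x recog}.

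The only point that is not a mere substitution is the fine-structure ingredient, namely that $L_{\alpha}$ is pointwise definable and hence admits a parameter-free definable bijection with $\omega$ for every index $\alpha$. This is the fact already tacit behind ``by fine-structure'' in Lemma \ref{code existence}, and it applies uniformly to all indices below $\omega_{1}^{\text{CK}}$, so no genuinely new obstacle arises; it is simply the ingredient whose availability one must remark upon before invoking the construction.
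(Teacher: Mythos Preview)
Your proposal is correct and follows essentially the same approach as the paper's own proof: both construct the code for $L_{\alpha}$ by transplanting the construction of Lemma~\ref{code existence} (recursive code for $\alpha$, decoding via Lemma~\ref{early decoding}, interleaving with a fine-structural bijection $g:\omega\to L_{\alpha}$), and both recognize it by rerunning the algorithm of Lemma~\ref{x recog}. Your write-up is in fact more explicit than the paper's, which merely points back to those two lemmas; your final paragraph correctly isolates the one non-mechanical ingredient (pointwise definability of $L_{\alpha}$ for indices $\alpha$), which the paper likewise absorbs into the phrase ``by fine-structure.''
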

\begin{proof}
	Since $\alpha<\omega_{1}^{\text{CK}}$, there is a recursive real number $r$ that codes $\alpha$. But then, there is, as in Lemma \ref{code existence}, a code $c$ for $L_{\alpha}$ that is (i) contained in $L_{\alpha+1}$, (ii) codes ordinals by even numbers and such that (iii) $\{p(i,j):i,j\in\omega\wedge p(2i,2j)\in c\}=r$. Since $L_{\alpha+1}$ contains a bijection $f:\omega\rightarrow L_{\alpha}$, it is easy to see that we can take $c$ to be an element of $L_{\alpha+1}$. Now $c$ is recognizable as in the proof of Lemma \ref{x recog}. 
\end{proof}

Thus, we obtain:

\begin{thm}
	There is a lost melody for Infinite Time Blum-Shub-Smale machines.
\end{thm}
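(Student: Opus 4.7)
The plan is to assemble the lemmas already established, since the real work has been carried out in the preceding constructions. Recall that a lost melody is by definition a real number that is ITBM-recognizable but not ITBM-computable, so I need only to produce a single witness that is simultaneously recognizable and non-computable.

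First, I would invoke Lemma \ref{code existence} to obtain a real number $x$ satisfying properties (1)--(4) stated at the beginning of Section 2. Next, I would appeal to the preceding lemma on non-computability to conclude that $x$ is not ITBM-computable; the argument there reduces the ITBM-halting set $H$ to $x$ via bounded truth-predicate evaluation over $L_{\omega^{\omega}}$, so ITBM-computability of $x$ would entail ITBM-computability of $H$, which is absurd. Finally, I would invoke Lemma \ref{x recog} to conclude that $x$ is ITBM-recognizable. These two facts together say precisely that $x$ is a lost melody for ITBMs.

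I expect no genuine obstacle at this stage, because the key technical difficulty, namely that ITBMs are too weak to check whether an arbitrary input $y$ codes a well-ordered structure, was already handled in Lemma \ref{x recog} by the design choices made in Lemma \ref{code existence}. Specifically, the witness $x$ was built so that its ordinal part agrees with a fixed recursive code $c$ of $\omega^{\omega}$, which permits ordinality to be verified by a simple recursive comparison rather than a genuine well-foundedness test, while iterated Turing jumps supply the truth-predicate evaluation needed to compare the definable set $\{i : L_{\omega^{\omega}} \models \phi(i)\}$ against $y$. With this machinery already in place, the lost melody theorem for ITBMs becomes an immediate corollary of the two preceding lemmas.
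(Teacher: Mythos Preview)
Your proposal is correct and matches the paper's approach exactly: the paper states the theorem immediately after the lemmas you cite with only the words ``Thus, we obtain,'' treating it as a direct corollary of Lemma~\ref{code existence}, the non-computability lemma, and Lemma~\ref{x recog}. Your additional commentary on why the well-foundedness obstacle was already circumvented is accurate and helpful, but the formal content of the proof is precisely the assembly of prior lemmas that you describe.
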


%\subsection{The ITBM-Halting Real is Recognizable}

It is known from \cite{Ca2} that the set of indices of halting ITRM-programs is ITRM-recognizable, while the set of indices of halting ITTM-programs is not ITTM-recognizable. Here, we show that ITBMs resemble ITRMs in this respect: Namely, define $H$ to be the set of natural numbers $i$ such that $P_{i}$ halts. It is not hard to see, (though a bit cumbersome) that a code $c$ for $L_{\omega^{\omega}}$ is ITBM-computable from $H$, say by the program $P$. Now, to identify whether a given real number $x$ is equal to $H$, first check, using the bounded halting problem solver, whether $P^{x}$ will halt. If not, output $0$ and halt. If yes, let $y$ be the output of $P^{x}$ and check, as in the proof of Lemma \ref{x recog}, whether $y$ is a code for $L_{\omega^{\omega}}$. If not, output $0$ and halt. Otherwise, use $y$ to compute, again as in the proof of Lemma \ref{x recog}, the set $H$, (which is $\Sigma_{1}$ over $L_{\omega^{\omega}}$) and compare $x$ to $H$.
Thus, we get:

\begin{thm}
	The real number $H$ coding the halting problem for ITBMs is ITBM-recognizable.
\end{thm}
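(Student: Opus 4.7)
\emph{Proof proposal.} The plan is to mimic the recognition scheme of Lemma \ref{x recog}, reducing ``is $y=H$?'' to ``does $y$ allow us to reconstruct a code for $L_{\omega^{\omega}}$ from which we can in turn recompute $H$?''. The underlying observation is that $H$ and codes of $L_{\omega^{\omega}}$ are mutually ITBM-computable from one another: from a code of $L_{\omega^{\omega}}$ we already extracted $\Sigma_{1}$-truths via truth-predicate evaluation in the proof of Lemma \ref{x recog}, and $H$ is $\Sigma_{1}$ over $L_{\omega^{\omega}}$, so one direction is free.

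First I would construct an ITBM-program $P$ that, on input $H$, outputs a code $c\in L_{\omega^{\omega}+1}$ for $L_{\omega^{\omega}}$ of the kind used in Corollary \ref{many recognizable codes}. The key point is that $H$ lets us decide, for every pair $(i,n)\in\omega^{2}$, whether the $n$-th bit of the output of $P_{i}$ is $1$: build an auxiliary program $Q_{i,n}$ which simulates $P_{i}$ and halts iff that bit is set, then look up its index in $H$. This effectively enumerates the ITBM-computable reals, which by Theorem \ref{basic facts}(ii) are exactly the reals of $L_{\omega^{\omega}}$. Combining this with a fine-structural $\Sigma_{1}$-Skolem surjection $\omega\twoheadrightarrow L_{\omega^{\omega}}$ (and using $H$ once more to decide the associated $\Sigma_{1}$-membership questions) I assemble a bijection $g:\omega\to L_{\omega^{\omega}}$ and read off $c:=\{p(i,j):g(i)\in g(j)\}\in L_{\omega^{\omega}+1}$.

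With $P$ in hand the recognition program $R$ runs as follows. On input $y$, first test whether $P^{y}$ halts; this is decidable because $P$ has only finitely many nodes, so by Theorem \ref{basic facts}(i) and the remark following it we may run $P^{y}$ up to the corresponding time bound and check. If it fails to halt, output $0$. Otherwise let $c^{\star}$ be the output and apply the recognition routine of Lemma \ref{x recog} and Corollary \ref{many recognizable codes} (with $\alpha=\omega^{\omega}$) to verify that $c^{\star}$ really codes $L_{\omega^{\omega}}$ in the expected format; if not, output $0$. Finally use truth-predicate evaluation on $c^{\star}$ to compute $H^{\star}:=\{i\in\omega:L_{\omega^{\omega}}\models\text{``$P_{i}$ halts''}\}$, compare $y$ with $H^{\star}$ bit by bit, and output $1$ iff they agree. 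Soundness is immediate; for completeness, if $y\neq H$ then either $P^{y}$ diverges, its output fails the code check, or the recomputed $H^{\star}$ coincides with the true $H\neq y$ and the bitwise comparison fails.

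The main obstacle is the construction of $P$, which the paper rightly labels ``not hard to see but a bit cumbersome''. The subtlety is that $L_{\omega^{\omega}}$ contains many elements that are not reals, so one cannot just list the outputs of halting programs and stop; one has to plug the halting oracle into a fine-structural enumeration scheme and verify carefully that the resulting $\in$-diagram is uniformly decidable from $H$ by an ITBM operating within the $\omega^{\omega}$ time bound. Once $P$ exists, everything downstream is a direct re-run of the machinery developed for Lemma \ref{x recog}.
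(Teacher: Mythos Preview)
Your proposal is correct and follows essentially the same route as the paper: compute a code for $L_{\omega^{\omega}}$ from the input via a fixed program $P$, use the bounded halting-time property to test whether $P$ halts on the given input, verify that the output really codes $L_{\omega^{\omega}}$ as in Lemma~\ref{x recog}, recompute $H$ from that code via truth-predicate evaluation, and compare. You supply somewhat more detail than the paper on how $P$ might actually be built, but the overall architecture and the key ideas match.
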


%\begin{thm}
%Define $H$ to be the set of natural numbers $i$ such that $P_{i}$ halts. Then $H$ is ITBM-recognizable. 
%end{thm}
%\begin{proof}
	
%It suffices to show that a code $c$ for $L_{\omega^{\omega}}$ is ITBM-computable from $H$. For let $P$ be an ITBM-program such that $P^{H}$ computes $c$. To identify whether a given real number $x$ is equal to $H$, first check, using the bounded halting problem solver, whether $P^{x}$ will halt. If not, output $0$ and halt. If yes, let $y$ be the output of $P^{x}$ and check, as in the proof of Lemma \ref{x recog}, whether $y$ is a code for $L_{\omega^{\omega}}$. If not, output $0$ and halt. Otherwise, use $y$ to compute, again as in the proof of Lemma \ref{x recog}, the set $H$, (which is $\Sigma_{1}$ over $L_{\omega^{\omega}}$) and compare $x$ to $H$.

%It remains to show that a code for $L_{\omega^{\omega}}$ is computable from $H$. %DO IT!!!

%%Question: Recognizability of the halting real $H$? Distribution? 
%%hierher

%Vermutung: $H$ ist recognizable, weil man aus $H$ recht leicht (auslesen und zusammenf\"ugen) einen Code f\"ur $L_{\omega^{\omega}}$ berechnen, diesen identifizieren kann (wie oben beschrieben - er muss nicht eindeutig identifiziert werden, es reicht, zu wissen, dass es einer ist) und dann dar\"uber die $\Sigma_{1}$-Formel auswerten kann, die $H$ \"uber $L_{\omega^{\omega}}$ definiert. 
%\end{proof}

\section{The Distribution of ITBM-Recognizable Real Numbers}

Where do ITBM-recognizable real numbers occur in $L$? This question was studied in detail in \cite{Ca1} and \cite{Ca2} for the case of ITRMs and in \cite{Ca3} for wITRMs, where it turned out that the wITRM-recognizable real numbers coincide with the wITRM-computable real numbers (i.e., there are no lost melodies for wITRMs), which is known from Koepke \cite{Ko1} to coincide with the hyperarithmetical real numbers. By an adaptation of the proof in \cite{Ca3}, we obtain:

\begin{lemma}{\label{upper bound}}
	Let $x\subseteq\omega$ be ITBM-recognizable. Then $x\in L_{\omega_{1}^{\text{CK}}}$. 
\end{lemma}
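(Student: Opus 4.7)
The plan is to prove that for every ITBM-program $P$, the set $R_P := \{y \subseteq \omega : P^y \text{ halts with output } 1\}$ is $\Delta^1_1$ (uniformly in $P$), in the spirit of the wITRM argument from \cite{Ca3} that the paper cites. Once this is established, if $P$ witnesses ITBM-recognizability of $x$, then $\{x\} = R_P$ is a $\Delta^1_1$ singleton, and the standard manoeuvre
\[ x(n) = 1 \iff \exists y\,(y \in R_P \wedge y(n) = 1) \iff \forall y\,(y \in R_P \to y(n) = 1) \]
exhibits $x$ itself as both $\Sigma^1_1$ and $\Pi^1_1$, hence $\Delta^1_1$, hence hyperarithmetic; by the standard identification of the hyperarithmetic reals with $L_{\omega_1^{\text{CK}}} \cap 2^{\omega}$, this gives $x \in L_{\omega_1^{\text{CK}}}$.

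The structural input is Theorem \ref{basic facts}(i): if $P$ has $n$ nodes, then $P^y$ either halts in fewer than $\omega^{n+1}$ steps or not at all. Fix a recursive linear ordering $\prec_n$ on $\omega$ of order type $\omega^{n+1}$. A computation trajectory of $P^y$ of length at most $\omega^{n+1}$ can then be coded by a single real $z$ recording, for each stage indexed via $\prec_n$, the active program line together with the contents of the finitely many real-valued registers, the reals at each stage merged into $z$ by a fixed recursive pairing. The condition ``$z$ codes a valid trajectory of $P^y$'' is arithmetic in $y$ and $z$: successor steps are checked against the BSS transition rules at the current program line, and at limit stages one checks that the new program line equals the $\liminf$ of earlier lines and that each register content equals the Cauchy limit of its earlier values, both of which are $\Pi^0_2$ conditions on $z$ once the arithmetically definable set of $\prec_n$-limit indices is located.

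With this coding, $R_P(y)$ reads as ``there exists $z$ coding a valid trajectory of $P^y$ that reaches a halting command with output $1$ at some $\prec_n$-stage'', putting it in $\Sigma^1_1(y)$. Its negation is also $\Sigma^1_1(y)$, witnessed by the existence of a $z$ coding a valid trajectory that (a) halts with output different from $1$, (b) exhausts the whole of $\prec_n$ without ever halting, or (c) becomes undefined at some limit because a register's Cauchy limit fails to exist. Determinism of ITBM-computations guarantees that the (possibly partial) trajectory of $P^y$ is unique, so exactly one of these two $\Sigma^1_1$ alternatives holds, placing $R_P$ in $\Sigma^1_1 \cap \Pi^1_1 = \Delta^1_1$, as required.

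The main technical obstacle is the careful arithmetic handling of the limit stages, in particular the Cauchy-limit condition on the real-valued register contents. This is the point at which the argument must genuinely adapt, rather than simply copy, the wITRM proof of \cite{Ca3}: in the wITRM setting registers hold natural numbers and limit values are $\liminf$'s, so no analysis of Cauchy convergence of rationals coded inside $z$ is needed. Once these $\Pi^0_2$ verifications at limits are set up and the bookkeeping of the recursive ordering $\prec_n$ is made explicit, the $\Delta^1_1$ classification of $R_P$ and hence the conclusion $x \in L_{\omega_1^{\text{CK}}}$ follow immediately.
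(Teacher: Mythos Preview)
Your argument is correct and shares its core with the paper's: both encode a halting ITBM-computation of bounded length $<\omega^{n+1}$ as a single real and observe that ``there exists such a code with output $1$'' is $\Sigma^1_1$, so $\{x\}=R_P$ is a $\Sigma^1_1$ singleton. The difference lies only in how one finishes. The paper simply invokes Kreisel's basis theorem (every nonempty $\Sigma^1_1$ class contains a hyperarithmetic real) to conclude $x\in L_{\omega_1^{\text{CK}}}$, whereas you unpack the singleton case of that theorem by hand via the $\exists y$/$\forall y$ manoeuvre. Note that your extra work establishing that $R_P$ is also $\Pi^1_1$ is not actually needed for this manoeuvre: with $R_P$ given by a $\Sigma^1_1$ formula $\exists z\,\phi(y,z)$, the universal clause $\forall y\,(y\in R_P\to y(n)=1)$ already rewrites as $\forall y\,\forall z\,(\phi(y,z)\to y(n)=1)$, which is $\Pi^1_1$. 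So your route is a little more self-contained (no black-box appeal to Kreisel) at the cost of a superfluous $\Pi^1_1$ computation, while the paper's route is shorter but leans on an external basis theorem.
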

\begin{proof}
Let $x\subseteq\omega$ be ITBM-recognizable, and let $P$ be an ITBM-program that recognizes $x$. It follows that there is an ordinal $\gamma<\omega^{\omega}$ such that $P^{x}$ halts in exactly $\gamma$ many steps. As a snapshot of an ITBM can easily be encoded as a real number, the same holds for a $\gamma$-sequence of such snapshots.

Now, the statement ``There is a real number $g$ such that $g$ codes an ITBM-computation of length $\gamma$ in the oracle $y$ that halts with output $1$'' is $\Sigma_{1}^{1}$; thus, the set of such $y$ is $\Sigma_{1}^{1}$, and, in particular, $\{x\}$ is $\Sigma_{1}^{1}$. %[VERGLEICHE MIT KOEPKE-MOROZOV!]
By Kreisel's basis theorem (see, e.g., \cite{Sa}, Theorem 7.2), it follows that $x\in L_{\omega_{1}^{\text{CK}}}$.
\end{proof}

\begin{lemma}
	For every $\alpha<\omega_{1}^{\text{CK}}$, there is an ITBM-recognizable real number $x$ such that $x\notin L_{\alpha}$. More specifically, if $\alpha<\omega_{1}^{\text{CK}}$ is an index, then $L_{\alpha+1}\setminus L_{\alpha}$ contains an ITBM-recognizable real number.
\end{lemma}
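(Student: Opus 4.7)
My plan is to deduce the first (general) claim from the second by noting that indices are cofinal below $\omega_{1}^{\text{CK}}$: for arbitrary $\alpha < \omega_{1}^{\text{CK}}$ I can pick an index $\alpha' \in [\alpha, \omega_{1}^{\text{CK}})$ and invoke the second claim to obtain a recognizable real in $L_{\alpha'+1}\setminus L_{\alpha'}$, which automatically lies outside $L_\alpha$.

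For an index $\alpha$, I would apply Corollary \ref{many recognizable codes} to obtain an ITBM-recognizable code $c \in L_{\alpha+1}$ for $L_\alpha$, built from a bijection $f : \omega \to L_\alpha$ so that $(\omega, E_c)$ is isomorphic to $(L_\alpha, \in)$. The remaining and main task is to show $c \notin L_\alpha$. The key observation is that because $c$ codes $L_\alpha$ faithfully, for any $\Sigma_1$-formula $\phi$ and parameter $p = f(j) \in L_\alpha$, the set $\{i : L_\alpha \models \phi(f(i), p)\}$ is recursively enumerable in $c$: its $\Delta_0$-matrix reduces to oracle queries on $E_c$, and hence to oracle queries on $c$. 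Therefore every $\Sigma_1(L_\alpha, L_\alpha)$-definable subset of $\omega$ is $\Sigma_1$-definable from $c$ over $L_\omega$.

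If one assumes $c \in L_\alpha$, a short case split places every such $\Sigma_1(L_\alpha, L_\alpha)$-subset of $\omega$ into $L_\alpha$ as well: in the limit case, pick $\beta < \alpha$ with $c \in L_\beta$ so the $\Sigma_1$-definition with parameter $c$ lands in $L_{\beta+1} \subseteq L_\alpha$; in the successor case $\alpha = \gamma + 1$, the fresh $c$ is itself definable over $L_\gamma$, and substituting this definition into the $\Sigma_1$-formula places the set in $L_{\gamma+1} = L_\alpha$. But because $\alpha$ is an index, standard fine structure (see \cite{Je}) gives $\rho_1^\alpha = \omega$, so some $\Sigma_1(L_\alpha, p)$-definable subset of $\omega$ is \emph{not} in $L_\alpha$. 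The resulting contradiction forces $c \in L_{\alpha+1}\setminus L_\alpha$. The main obstacle I anticipate is verifying cleanly that truth of $\Delta_0$-formulas about $f$-indexed elements of $L_\alpha$ uniformly Turing-reduces to $c$, and handling the successor case where $c$ is fresh at level $\alpha$; both amount to unwinding the truth-evaluation argument used in Lemma \ref{x recog} applied now to arbitrary $\Sigma_1$-formulas with coded parameters.
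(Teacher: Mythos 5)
Your overall decomposition matches the paper's: reduce the first claim to the second via cofinality of indices, invoke Corollary \ref{many recognizable codes} to get a recognizable code $c\in L_{\alpha+1}$ for $L_\alpha$, and then argue $c\notin L_\alpha$. (The paper additionally splits off the trivial case $\alpha<\omega^{\omega}$, where every new real is ITBM-computable; this is harmless either way.) The divergence, and the problem, is in your argument that $c\notin L_\alpha$. You rest the contradiction on the assertion that ``$\alpha$ an index implies $\rho_1^\alpha=\omega$,'' i.e.\ that some $\Sigma_1(L_\alpha,p)$-definable subset of $\omega$ is new at $\alpha$. That is not what standard fine structure gives you: being an index yields a new subset of $\omega$ that is \emph{first-order} definable over $L_\alpha$ with parameters, equivalently $\rho_n^\alpha=\omega$ for \emph{some} $n$, and the $\Sigma_1$-projectum can in general sit strictly above $\omega$ even at an index. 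As stated, this step is unjustified and your contradiction does not close. A second, smaller slip: $\Sigma_1$-truth over $L_\alpha$ read off from the code $c$ is not r.e.\ in $c$, because bounded ($\Delta_0$) quantifiers over $L_\alpha$ become \emph{unbounded} number quantifiers over the field of $E_c$; what you actually get is that such truth is arithmetic in $c$ (recursive in a finite jump of $c$, or in $c^{(\omega)}$ uniformly over all formulas).

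Both defects are repairable, and the repair collapses your argument into something close to the paper's. Drop $\Sigma_1$ entirely: the full first-order truth predicate of $(L_\alpha,\in)$ on $f$-indexed elements is recursive in $c^{(\omega)}$, so every real in $L_{\alpha+1}=\mathrm{Def}(L_\alpha)$ is arithmetic in $c$; since $\alpha$ is an index, one such real $r$ is not in $L_\alpha$; and your case split (limit $\alpha$: $c\in L_\beta$ for $\beta<\alpha$, so anything arithmetic in $c$ lies in $L_{\beta+\omega}\subseteq L_\alpha$; successor $\alpha=\gamma+1$: $c$ is definable over $L_\gamma$, so anything arithmetic in $c$ is definable over $L_\gamma$) then shows $c\in L_\alpha$ would force $r\in L_\alpha$, a contradiction. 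That case split is a genuinely useful piece of care. The paper itself takes a slightly more elementary route that avoids fine structure altogether: since $c$ codes $L_\alpha$, it enumerates all reals of $L_\alpha$, so a diagonal real arithmetic in $c$ lies outside $L_\alpha$, and $c\in L_\alpha$ would pull that diagonal real back into $L_\alpha$. You should either adopt that diagonalization or restate your fine-structural input as ``some definable-over-$L_\alpha$ subset of $\omega$ is new,'' which is exactly the definition of an index and needs no projectum computation.
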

\begin{proof}
    The first claim follows from the second one, as there are cofinally many indices in $\omega_{1}^{\text{CK}}$. We thus show the second claim. Let $\alpha<\omega_{1}^{\text{CK}}$ be an index. If $\alpha<\omega^{\omega}$, every real number in $L_{\alpha+1}\setminus L_{\alpha}$ is ITBM-computable and thus ITBM-recognizable. So suppose that $\alpha\geq\omega^{\omega}$. 
    
    By Corollary \ref{many recognizable codes}, $L_{\alpha+1}$ contains an ITBM-recognizable code $c$ for $L_{\alpha}$. It thus suffices to see that $c\notin L_{\alpha}$. But it is clear that, as $c$ codes all real numbers contained in $L_{\alpha}$, we can define by diagonalization from $c$ a real number not contained in $L_{\alpha}$. For $c\in L_{\alpha}$, that real number would then be contained in $L_{\alpha}$ as well, a contradiction.
    
    %Let $\alpha<\omega_{1}^{\text{CK}}$. Without loss of generality, assume that $\alpha>\omega^{\omega}$. Let $\beta>\alpha$ be the next index $>\alpha$ and let $\gamma$ be the next index $>\beta$. By standard fine-structure, we have $\gamma<\omega_{1}^{\text{CK}}$. 
    %By Corollary \ref{many recognizable codes}, $L_{\gamma}$ has an ITBM-recognizable code $x$. 
    
    %Suppose for a contradiction that $x\in L_{\alpha}$. By choice of $\beta$, $L_{\gamma}$ contains a real number $r$ that is not contained in $L_{\alpha}$. But then, as $r$ is definable from $x$, we have $r\in L_{\alpha}[x]=L_{\alpha}$, a contradiction. 
    
    Thus $x$ is as desired.
    
	%Das Argument von oben zeigt, dass f\"ur jedes $\alpha<\omega_{1}^{\text{CK}}$ ein erkennbarer Code f\"ur $L_{\alpha}$ existiert. Die liegen vermutlich kofinal in $L_{\omega_{1}^{\text{CK}}}$ [CHECK THIS!].
\end{proof}

In combination with Lemma \ref{upper bound} above, this shows that new ITBM-recognizable real numbers appear wherever they can, i.e. at every $L$-level $<\omega_{1}^{\text{CK}}$ at which new real numbers appear at all. This is in contrast both with ITRMs and ITTMs, for which there are ``gaps'' in the set of constructible levels at which new recognizable ordinals appear, see, e.g. \cite{Ca1} or \cite{Ca}.

%Zur Verteilung, genauere Fragen: Liegen alle erkennbaren reellen Zahlen in $L_{\omega^{\omega}+1}}$? 
%Von denen sind jedenfalls diejenigen erkennbar, aus denen sich ein Code f\"ur $L_{\omega^{\omega}}$ berechnen l\"a{\ss}t.

%Vermutung: Nein, erkennbare k\"onnen deutlich h\"oher liegen. Aus der $\omega$-ten Iteration des Halteproblems auf einem Code f\"ur $L_{\omega^{\omega}}$ sollte man z.B. einen Code f\"ur $L_{\omega^{\omega}+2}$ kriegen, \"uber dem man dann weiter machen kann.

%Neue Frage: Liegen die erkennbaren alle in $L_{\omega^{\omega}+\omega}$? Nein. Das Argument von oben zeigt, dass f\"ur jedes $\alpha<\omega_{1}^{\text{CK}}$ ein erkennbarer Code f\"ur $L_{\alpha}$ existiert. Die liegen vermutlich kofinal in $L_{\omega_{1}^{\text{CK}}$ [CHECK THIS!].

%Andererseits: Da Berechnungen in der L\"ange durch $\omega^{\omega}$ beschr\"ankt sind, sind ITBM-erkennbare reelle zahlen immer $\Sigma_{1}^{1}$-singletons (snapshots sind durch reelle zahlen codierbar, eine $\omega^{\omega}$ lange Folge davon also ebenfalls, und die codierung ist einfach), nach Kreisels Basissatz liegen sie also in $L_{\omega_{1}^{\text{CK}}$.

\section{Non-recognizability}	

Given the results of the preceding section that the ITBM-recognizable real numbers appear cofinally in $L_{\omega_{1}^{\text{CK}}}$, it becomes natural to ask whether the same happens for ITBM-nonrecognizability. (Note that, for weak ITRMs, the set of recognizable real numbers coincides with $\mathbb{R}\cap L_{\omega_{1}^{\text{CK}}}$.) Moreover, since ITBMs increase in computational strength the more computational nodes are allowed in the program (so that, in particular, there is no universal ITBM) (see Koepke and Morozov, \cite{KM}), one may wonder whether the same happens for their recognizability strength (which is the case for ITRMs when one increases the number of registers, see \cite{Ca2}).
	
In this section, we will show that (i) non-ITBM-recognizable real numbers appear cofinally often in $L_{\omega_{1}^{\text{CK}}}$ and (ii) for every $n\in\omega$, there is a real number $x$ that is ITBM-recognizable (in fact ITBM-computable), but not ITBM-recognizable by a program with $\leq n$ many nodes.
	
The proof idea for both results is to consider real numbers that are Cohen-generic over sufficiently high $L$-levels below $L_{\omega_{1}^{\text{CK}}}$.\footnote{The same approach was used in \cite{Ca2} to obtain non-recognizables for ITRMs.} However, since we are working below the first admissible ordinal, the amount of set theory available in the relevant ground models is very small. Fortunately, forcing over the very weak set theory PROVI has been worked out by A. Mathias in Mathias \cite{Ma} and Bowler and Mathias \cite{BM}. The results from these papers that will be relevant below are summarized in the following lemma:

\begin{lemma}{\label{mathias stuff}} (Mathias and Bowler)

(i) [\cite{BM}] $L_{\alpha}$ is provident, i.e., a model of PROVI, if and only if $\alpha$ is indecomposable. In particular, $\omega^{\iota}$ is provident for all ordinals $\iota>0$. %steht im fließtext auf seite 32 von diesem dokument: https://www.google.com/url?sa=t&rct=j&q=&esrc=s&source=web&cd=&ved=2ahUKEwiD98Dmgv3rAhWCzKQKHUuNCmIQFjABegQIARAB&url=https%3A%2F%2Fpreprint.math.uni-hamburg.de%2Fpublic%2Fpapers%2Fhbm%2Fhbm613.pdf&usg=AOvVaw0Up-adsn9hCAMv2MvC6lHC (preprint-server der uni hamburg)

(ii) [\cite{Ma}, the forcing theorem for $\Delta_{0}$-formulas over provident sets] If $L_{\alpha}$ is provident, then the forcing theorem for $\Delta_{0}$-formulas (and forcings contained in $L_{\alpha}$) holds for $L_{\alpha}$. In particular, if $G$ is Cohen-generic over $L_{\alpha}$ and $\phi(\dot{a_{1}},...,\dot{a_{k}})$ is $\Delta_{0}$ (where $\dot{a_{1}},...,\dot{a_{n}}$ are names for Cohen forcing in $L_{\alpha}$) and $G$ is a Cohen-generic filter over $L_{\alpha}$ (i.e., $G$ intersects every dense subset of Cohen forcing that is contained in $L_{\alpha}$) then $L_{\alpha}[G]\models\phi(\dot{a_{1}}^{G},...,\dot{a_{k}}^{G})$ if and only if there is $p\in G$ such that $p\Vdash \phi(\dot{a_{1}},...,\dot{a_{k}})$.

\end{lemma}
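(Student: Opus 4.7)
The plan is not to reprove Lemma \ref{mathias stuff} from scratch, since it is an explicit digest of theorems proved in detail in \cite{Ma} and \cite{BM}; instead, I would extract the precise statements and verify they apply in the form stated here. The approach splits naturally into the two parts of the lemma.

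For part (i), I would begin by recalling the definition of PROVI from Bowler--Mathias: a provident set is a transitive set closed under the rudimentary functions together with a scheme of closure under certain simple recursions (the ``genetic'' operator). The task is then to characterize those $\alpha$ for which $L_\alpha$ has both kinds of closure. One direction is a rudimentary-closure argument: if $\alpha$ is additively indecomposable, i.e.\ $\alpha = \omega^{\iota}$, then for any $\beta < \alpha$ the construction of $L_\beta$ and of any rudimentary recursion over a structure in $L_\beta$ can be carried out and iterated inside $L_\alpha$, since sums of ordinals below $\alpha$ remain below $\alpha$. For the converse, if $\alpha = \delta + \gamma$ with $0 < \gamma < \alpha$, then $\delta$ is definable over $L_\alpha$ and one produces a rudimentary recursion of length $\gamma$ starting from $\delta$ whose range escapes $L_\alpha$, contradicting providence. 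The consequence $\omega^{\iota} \models \mathrm{PROVI}$ for $\iota > 0$ is then immediate.

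For part (ii), I would invoke Mathias's forcing theorem for $\Delta_{0}$-formulas over provident sets. The crucial observation is that the usual definition of $p \Vdash \phi$ by $\in$-recursion on names and formulas is, for $\Delta_{0}$ formulas, itself a $\Delta_{0}$-recursion, so it can be carried out inside any provident $L_\alpha$ even though neither $\Sigma_{1}$-Replacement nor full Separation is available. Genericity of $G$ (intersecting every dense subset of Cohen forcing in $L_\alpha$) then yields the standard induction on formula complexity giving both the truth and definability lemmas for $\Delta_{0}$ statements in $L_{\alpha}[G]$.

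The main obstacle I anticipate is purely in the bookkeeping: one must check that the forcing apparatus set up in the external references, phrased for arbitrary provident ground models, specializes correctly to the slim $L$-levels $L_{\omega^{\iota}}$ used later in the paper, and that the names and dense sets relevant to our applications actually live inside $L_{\omega^{\iota}}$. Since only Cohen forcing appears, which is decided by finite conditions, the nice-name construction stays at a very low level and this reduction is essentially routine --- but it is the point at which care is required to avoid accidentally invoking a fragment of set theory stronger than what PROVI actually supplies.
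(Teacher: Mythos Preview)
Your proposal is appropriate: the paper offers no proof at all for this lemma, treating it purely as a citation of results from \cite{BM} and \cite{Ma}. Your plan to extract and verify the relevant statements from those references, together with the brief sketches you give for parts (i) and (ii), is already more than the paper provides and is entirely in keeping with its intent.
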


\begin{lemma}{\label{generic nonrecog}}
	If $n\in\omega$ and $x\subseteq\omega$ is Cohen-generic over $L_{\omega^{n+1}}$, then $x$ is not ITBM-recognizable by an ITBM-program using $< n$ many nodes. In particular, if $x$ is Cohen-generic over $L_{\omega^{\omega}}$, then $x$ is not ITBM-recognizable.
\end{lemma}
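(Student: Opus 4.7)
The plan is to assume that $x$ is recognized by an ITBM-program $P$ using $k<n$ many nodes and to derive a contradiction by exhibiting a Cohen-generic $x'\ne x$ for which $P^{x'}$ also halts with output $1$, which would mean $P$ recognizes two distinct reals.

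First I would invoke Theorem~\ref{basic facts}(i) to conclude that $P^x$ halts in some $\gamma<\omega^{k+1}\leq\omega^n$ many steps, so the corresponding snapshot sequence is an object of rank well below $\omega^{n+1}$ and thus codable as an element of $L_{\omega^{n+1}}$. Then I would express the statement ``$P^{\dot x}$ halts with output $1$ in fewer than $\omega^{k+1}$ steps'' as a $\Delta_0$-formula $\phi(\dot x)$ over $L_{\omega^{n+1}}$, bounding the existential quantifier ``there exists a halting computation sequence'' by a set in $L_{\omega^{n+1}}$ large enough to contain every possible such halting sequence. Since $\omega^{n+1}$ is indecomposable, Lemma~\ref{mathias stuff}(i) tells us that $L_{\omega^{n+1}}$ is provident, so Lemma~\ref{mathias stuff}(ii) applies and yields a condition $p$ in the Cohen filter $G$ associated to $x$ with $p\Vdash\phi(\dot x)$.

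Next, using that Cohen conditions are finite, I would pick some $i\in\omega$ not in the domain of $p$ and set $q:=p\cup\{(i,1-x(i))\}$. Then $q\leq p$, so $q\Vdash\phi(\dot x)$ as well. Since $L_{\omega^{n+1}}$ is countable, a standard diagonal construction produces a Cohen-generic real $x'$ over $L_{\omega^{n+1}}$ with $q$ in its associated filter, in particular $x'\supseteq q$. Applying Lemma~\ref{mathias stuff}(ii) to $x'$ gives $L_{\omega^{n+1}}[x']\models\phi(x')$; by absoluteness of bounded-length ITBM-computations between transitive models containing the computation, $P^{x'}$ then really halts with output $1$. But $x'(i)=1-x(i)\ne x(i)$, so $x'\ne x$, contradicting the recognition property of $P$. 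The ``in particular'' clause follows because every ITBM-program has finitely many nodes, so a Cohen-generic real over $L_{\omega^{\omega}}$ is automatically Cohen-generic over each $L_{\omega^{n+1}}$.

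I expect the main obstacle to be the careful verification that ``$P^{\dot x}$ halts in time $<\omega^{k+1}$ with output $1$'' can be rewritten as a genuinely $\Delta_0$-formula over $L_{\omega^{n+1}}$, since this requires an explicit coding of ITBM snapshots (including the Cauchy-limit behavior of register contents at limit stages) as elements of $L_{\omega^{n+1}}$ and writing the transition rules in bounded-quantifier form with parameters for the program $P$ and the bounding ordinal $\omega^{k+1}$.
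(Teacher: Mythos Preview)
Your proposal is correct and follows essentially the same route as the paper: bound the halting time below $\omega^{n}$, express ``$P^{\dot x}$ halts with output $1$'' as a $\Delta_0$ statement, invoke providence of $L_{\omega^{n+1}}$ and Lemma~\ref{mathias stuff}(ii) to get a forcing condition $p$, and pass to a different generic through $p$. The one place where the paper is slightly more explicit than your sketch is precisely the obstacle you flag: since the computation sequence contains $x$ in its snapshots, it does not lie in the ground model $L_{\omega^{n+1}}$, so the bounding set cannot literally be ``a set in $L_{\omega^{n+1}}$''; the paper instead takes a name $\dot A$ for $L_{\omega^{n}}[x]$ and writes the $\Delta_0$ statement as ``$\dot A$ contains a halting $P$-computation in the oracle $\dot x$ with output $1$''.
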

\begin{proof}
	Suppose that $x$ is Cohen-generic over $L_{\omega^{n}}$ and ITBM-recognizable by the program $P$ which uses $< n$ nodes. By the bound in Koepke and Seyfferth \cite{KS} on ITBM-halting times, since $P^{x}$ halts, $P^{x}$ will run for less than $\omega^{n}$ many steps. Consequently, the halting computation of $P^{x}$ with output $1$ will be contained in $L_{\omega^{n}}[x]$. Thus $L_{\omega^{n+1}}[x]$ believes that $L_{\omega^{n}}[x]$ contains a halting computation of $P$ in the oracle $x$ with output $1$, which is a $\Delta_{0}$-formula. Let $\dot{A}$ be a name for $L_{\omega^{n}}[x]$ and let $\dot{x}$ be a name for $x$. By the forcing theorem for $\Delta_{0}$-formulas over provident sets, there is a condition $p\subseteq x$ that forces that $\dot{A}$ contains a halting computation of $P$ in the oracle $\dot{x}$ with output $1$. Now let $y$ be a real number that is Cohen-generic over $L_{\omega^{n+1}}$, extends $p$ and is different from $x$. Then $p$ will also force that $P^{y}$ halts with output $1$, contradicting the assumption that $P$ recognizes $x$. 
	
	If $x$ is Cohen-generic over $L_{\omega^{\omega}}$, it is in particular Cohen-generic over $L_{\omega^{n}}$ for every $n\in\omega$, thus not recognizable by an ITBM-program with any number of nodes, and thus not ITBM-recognizable. 
\end{proof}

\begin{thm}{\label{nonrecognizable distribution}}
	(i) For each $n\in\omega$, there is a real number $x$ that is ITBM-computable (and thus ITBM-recognizable), but not ITBM-recognizable by a program with $< n$ many nodes.
	
	(ii) For each $\alpha<\omega_{1}^{\text{CK}}$, there is an ITBM-nonrecognizable real number in $L_{\omega_{1}^{\text{CK}}}\setminus L_{\alpha}$. In fact, if $\alpha>\omega^{\omega}$ is an index, then $L_{\alpha+1}\setminus L_{\alpha}$ contains a non-ITBM-recognizable real number. Thus, below $\omega_{1}^{\text{CK}}$, new non-ITBM-recognizable real numbers appear wherever new real numbers appear at all.
	%In fact, for $\iota\geq\omega$, if $\omega^{\iota}$ is an index, then there is an ITBM-nonrecognizable real number in $L_{\omega^{\iota}+1}\setminus L_{\omega^{\iota}}$.
\end{thm}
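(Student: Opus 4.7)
The plan is to use Cohen-genericity to produce non-recognizable reals, applying Lemma~\ref{generic nonrecog} at the end, and to control the $L$-level at which each generic appears by a standard diagonal construction made possible by fine-structural bijections.

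For part (i), I would fix $n\in\omega$. Since $\omega^{n+1}<\omega^{\omega}$, the countable structure $L_{\omega^{n+1}}$ is an element of $L_{\omega^{\omega}}$, so $L_{\omega^{\omega}}$ sees an enumeration $(D_{i}:i\in\omega)$ of the dense subsets of Cohen forcing contained in $L_{\omega^{n+1}}$. The standard construction $p_{0}=\emptyset$, $p_{i+1}=$ the lex-least extension of $p_{i}$ in $D_{i}$, carried out inside $L_{\omega^{\omega}}$, yields a real $x=\bigcup_{i}p_{i}\in L_{\omega^{\omega}}$ that is Cohen-generic over $L_{\omega^{n+1}}$. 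By Theorem~\ref{basic facts}(ii), $x$ is ITBM-computable, hence trivially ITBM-recognizable; by Lemma~\ref{generic nonrecog}, no program with fewer than $n$ nodes can recognize it.

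For part (ii), the first assertion follows from the second, since the indices are cofinal in $\omega_{1}^{\text{CK}}$: given $\alpha<\omega_{1}^{\text{CK}}$, pick an index $\beta>\max(\alpha,\omega^{\omega})$ and apply the refined claim at $\beta$. To prove the refined claim, fix an index $\alpha>\omega^{\omega}$; by fine-structure, $L_{\alpha+1}$ contains a bijection $f:\omega\to L_{\alpha}$, which inside $L_{\alpha+1}$ simultaneously gives enumerations $(r_{k}:k\in\omega)$ of the reals of $L_{\alpha}$ and $(D_{k}:k\in\omega)$ of the dense subsets of Cohen forcing lying in $L_{\omega^{\omega}}\subseteq L_{\alpha}$. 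I would then interleave two requirements along a chain $p_{0}\subseteq p_{1}\subseteq\ldots$: at stage $2k$, let $p_{2k+1}$ be the lex-least extension of $p_{2k}$ in $D_{k}$; at stage $2k+1$, let $p_{2k+2}$ extend $p_{2k+1}$ by one bit chosen to disagree with $r_{k}$ at position $|p_{2k+1}|$. The limit $x=\bigcup_{i}p_{i}$ lies in $L_{\alpha+1}$ (being definable from $f$), is Cohen-generic over $L_{\omega^{\omega}}$ (meeting every $D_{k}$), and differs from every $r_{k}$ (so $x\notin L_{\alpha}$). Lemma~\ref{generic nonrecog} then delivers non-recognizability.

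The point needing the most care is the diagonalization in (ii): one has to verify both that the interleaved construction is fully definable from the single bijection $f$ given by the index hypothesis (so that the limit real belongs to $L_{\alpha+1}$), and that the explicit diagonalization against $(r_{k})$ is what ensures $x\notin L_{\alpha}$, since $L_{\alpha}$ itself could already contain Cohen-generics over $L_{\omega^{\omega}}$ and therefore cannot be ruled out as a location for $x$ on genericity grounds alone.
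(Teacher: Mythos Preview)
Your argument is correct. Part (i) is essentially identical to the paper's. In part (ii) you take a slightly different route: the paper constructs, over $L_{\alpha}$, a real that is Cohen-generic over $L_{\alpha}$ itself (not merely over $L_{\omega^{\omega}}$), using the $<_{L}$-least extension into each dense set given by a definable surjection $\omega\to L_{\alpha}$. Genericity over $L_{\alpha}$ then automatically yields $x\notin L_{\alpha}$ (for each real $r\in L_{\alpha}$ the set of conditions not extended by $r$ is dense and lies in $L_{\alpha}$), so no separate diagonalization is needed. Your version only builds a generic over $L_{\omega^{\omega}}$ and therefore, as you correctly observe, must interleave an explicit diagonalization against the reals of $L_{\alpha}$; this is more hands-on but perfectly valid, and it makes the ``$x\notin L_{\alpha}$'' mechanism transparent. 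One small point of phrasing: saying $x$ is ``definable from $f$'' would a priori only place $x$ in $L_{\alpha+2}$; what you actually use is that $f$ is definable over $L_{\alpha}$ (this is the content of the index hypothesis), so the entire interleaved construction is first-order over $L_{\alpha}$ and hence $x\in L_{\alpha+1}$.
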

\begin{proof}
	
	(i) For $n\in\omega$, let $x\in L_{\omega^{n+2}+1}$ be Cohen-generic over $L_{\omega^{n+1}}$. (The argument for the existence of such a real number is analogous to the one used in part (ii).) By Lemma \ref{mathias stuff}, $L_{\omega^{n+1}}$ is provident. However, as $x\in L_{\omega^{\omega}}$, it is ITBM-computable and thus ITBM-recognizable, but not by a program with $<n$ many nodes by Lemma \ref{generic nonrecog}.

	(ii) By Lemma \ref{generic nonrecog}, it suffices to show that $L_{\alpha+1}\setminus L_{\alpha}$ contains a Cohen-generic real number over $L_{\omega^{\omega}}$ whenever $\alpha\geq\omega^{\omega}$ is an index. 

	We will show that $L_{\alpha+1}$ in fact contains a real number that is Cohen-generic over $L_{\alpha}$, which will in most cases be much more than demanded. By fine-structure, a surjection from $\omega$ to $L_{\alpha}$ is definable over $L_{\alpha}$; a fortiori, there is a surjection $f$ from $\omega$ to the dense subsets of Cohen-forcing in $L_{\alpha}$ definable over $L_{\alpha}$, say by the formula $\phi_{f}(x,y,q)$, $q$ a finite tuple of elements of $L_{\alpha}$. Now define $x:\omega\rightarrow 2$ by letting $x(i)=b$ if and only if there is a finite sequence $(p_{j}:j\leq k)$ of Cohen-conditions such that $p_{0}=\emptyset$ and, for all $j<k$, $p_{j+1}$ is the $<_{L}$-minimal element of $f(j)$ that extends $p_{j}$ and such that $p_{k}(i)=b$. By definition of $x$, it is Cohen-generic and definable over $L_{\alpha}$.

	%ansatz: wird in der naechsten stufe abzaehlbar, das sollte fuer rasiowa-sikorski reichen. 
	%genauer: die rekursion benutzt immer nur endliche folgen, die als elemente in der stufe liegen, davon kann man jeweils die kleinste nehmen, die noch moeglich ist. 
   %noch genauer: die bijektion ist durch eine formel gegeben, wir k�nnen also sagen "das i-te bit ist b, falls eine endliche folge von bedingungen existiert, die einander erweitern, von denen die j-te ein element der j-ten dichten menge ist (dazu benutzen wir die formel, die die bijektion zwischen \omega und den dichten mengen definiert [surjektion reicht]) und bei der das letzte element an der i-ten stelle ein b hat".
\end{proof}

Note that the programs used in the proof of Corollary \ref{many recognizable codes} can all be taken to use the same number of nodes, so that there is a fixed number $n$ such that the real numbers that are ITBM-recognizable by programs with $n$ nodes are already cofinal in $L_{\omega_{1}^{\text{CK}}}$. 

This leaves us with the question whether, for any $n\in\omega$, there is a non-ITBM-computable real number that is ITBM-recognizable, but by a program with $n$ nodes. This is indeed the case.

\begin{thm}{\label{high bounded nonrecog}}
Let $n\in\omega$. Then there are cofinally in $L_{\omega_{1}^{\text{CK}}}$ many real numbers $x$ that are ITBM-recognizable, but not with $\leq n$ nodes. In particular, there are cofinally in $L_{\omega_{1}^{\text{CK}}}$ many ITBM-lost melodies that are not ITBM-recognizable with $\leq n$ nodes.
\end{thm}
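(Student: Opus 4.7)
The plan is to combine the recognizable code construction of Corollary \ref{many recognizable codes} with the genericity-based lower bound of Lemma \ref{generic nonrecog}. Given $n\in\omega$ and an index $\alpha\geq\omega^{\omega}$, I would build $x_{\alpha}\in L_{\alpha+1}\setminus L_{\alpha}$ by interleaving $x_{\alpha}(2i)=c_{\alpha}(i)$ and $x_{\alpha}(2i+1)=g_{\alpha}(i)$, where $c_{\alpha}\in L_{\alpha+1}$ is the recognizable code for $L_{\alpha}$ given by Corollary \ref{many recognizable codes}, and $g_{\alpha}$ is the canonical Cohen-generic real over $L_{\omega^{n+2}}[c_{\alpha}]$, built greedily by taking the $<_{L}$-minimal extensions through an enumeration of the dense subsets of Cohen forcing lying in $L_{\omega^{n+2}}[c_{\alpha}]$, exactly as in the proof of Theorem \ref{nonrecognizable distribution}(ii). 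This $g_{\alpha}$ is definable over $L_{\omega^{n+2}}[c_{\alpha}]$ and hence ITBM-computable from $c_{\alpha}$; also $x_{\alpha}\notin L_{\alpha}$ since $c_{\alpha}\notin L_{\alpha}$, and $x_{\alpha}\notin L_{\omega^{\omega}}$, so $x_{\alpha}$ is a lost melody once shown to be recognizable.

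To recognize $x_{\alpha}$, on input $y$ I would extract the even and odd subreals $y_{e},y_{o}$, run the Corollary \ref{many recognizable codes} recognizer on $y_{e}$ to verify $y_{e}=c_{\alpha}$, and then, via truth-predicate evaluation over the structure coded by $y_{e}$, recompute $g_{\alpha}$ from $c_{\alpha}$ by internally carrying out its explicit definition over $L_{\omega^{n+2}}[c_{\alpha}]$, then compare the result with $y_{o}$. For the lower bound on nodes, suppose $P$ recognizes $x_{\alpha}$ with $\leq n$ many nodes; by the Koepke-Seyfferth bound, $P^{x_{\alpha}}$ halts in some $\gamma<\omega^{n+1}$ steps, so its halting run is coded in $L_{\omega^{n+1}}[x_{\alpha}]=L_{\omega^{n+1}}[c_{\alpha},g_{\alpha}]$. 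Applying the $\Delta_{0}$-forcing theorem of Lemma \ref{mathias stuff} to Cohen forcing over the provident model $L_{\omega^{n+2}}[c_{\alpha}]$, in which $g_{\alpha}$ is generic, the $\Delta_{0}$-statement (with $c_{\alpha}$ as a parameter) asserting that $L_{\omega^{n+1}}[c_{\alpha},\dot{g}]$ contains a halting computation of $P$ on input $c_{\alpha}\oplus\dot{g}$ with output $1$ is forced by some finite Cohen condition $q\subseteq g_{\alpha}$. Choosing a Cohen-generic $g'$ over $L_{\omega^{n+2}}[c_{\alpha}]$ that extends $q$ and differs from $g_{\alpha}$, the real $x':=c_{\alpha}\oplus g'\neq x_{\alpha}$ satisfies $P^{x'}$ halting with output $1$, contradicting the hypothesis that $P$ recognizes $x_{\alpha}$.

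Since indices are cofinal in $\omega_{1}^{\text{CK}}$, letting $\alpha$ range over indices $\geq\omega^{\omega}$ produces cofinally in $L_{\omega_{1}^{\text{CK}}}$ many reals $x_{\alpha}\in L_{\alpha+1}\setminus L_{\alpha}$ with the required properties. The main obstacle I anticipate is the appeal to providence of the relativized level $L_{\omega^{n+2}}[c_{\alpha}]$: Lemma \ref{mathias stuff}(i) is formulated only for the unrelativized $L_{\beta}$, so I would need the natural extension of the Mathias-Bowler framework to $L_{\beta}[z]$ for $\beta$ indecomposable and $z$ a real parameter, which should be a routine adaptation of their argument but ought to be verified explicitly.
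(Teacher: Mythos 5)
Your route is genuinely different from the paper's. The paper does not pair a recognizable code with a generic real; instead it builds an ITBM-computable injection $x\mapsto\tilde{x}$ with ITBM-computable inverse, where $\tilde{x}$ is obtained by running the greedy construction of a generic over the \emph{unrelativized} $L_{\omega^{n+1}}$ and encoding the bits of $x$ into the choices made at each dense set (at stage $i$ one either takes the lexically minimal extension in $D_{i}$ or a lexically minimal extension incompatible with it, according to whether $i\in x$). The point of this design is that $\tilde{x}$ is then Cohen-generic over $L_{\omega^{n+1}}$ itself, so Lemma \ref{generic nonrecog} applies verbatim, and recognizability of $\tilde{x}$ reduces to recognizability of $x$ via the computable inverse. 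Your interleaving $x_{\alpha}=c_{\alpha}\oplus g_{\alpha}$ achieves the same two goals but forces you to take $g_{\alpha}$ generic over the \emph{relativized} level $L_{\omega^{n+2}}[c_{\alpha}]$ (genericity over the unrelativized level would not let you absorb the parameter $c_{\alpha}$ in the forcing argument), and this is exactly where the gap you flag sits: Lemma \ref{mathias stuff} as available in the paper covers only $L_{\beta}$, so your lower-bound argument rests on a relativized providence and $\Delta_{0}$-forcing theorem for $L_{\omega^{n+2}}[c_{\alpha}]$ that is not established here. This is likely true but is a genuine additional obligation, and the paper's coding trick exists precisely to avoid it. Two smaller points: your claim that $x_{\alpha}\in L_{\alpha+1}$ is doubtful, since $g_{\alpha}$ is defined over $L_{\omega^{n+2}}[c_{\alpha}]$, which is not an initial segment of $L_{\alpha+1}$; this does not hurt the cofinality statement (you only need $x_{\alpha}\notin L_{\alpha}$, which follows since $c_{\alpha}$ is recursive in $x_{\alpha}$, together with $x_{\alpha}\in L_{\omega_{1}^{\text{CK}}}$ from Lemma \ref{upper bound}), but you should drop or weaken that claim. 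The rest of your recognition procedure is sound: $g_{\alpha}\in L_{\omega^{n+2}+1}[c_{\alpha}]\subseteq L_{\omega^{\omega}}[c_{\alpha}]$ is ITBM-computable from $c_{\alpha}$ by Theorem \ref{basic facts}(ii), so recomputing it after verifying $y_{e}=c_{\alpha}$ works. If you want to keep your decomposition, either prove the relativized Mathias--Bowler facts or switch to the paper's device of coding $c_{\alpha}$ into the generic itself.
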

\begin{proof}
	We will show that there is an ITBM-computable injection $f:\mathbb{R}\rightarrow\mathbb{R}$ with an ITBM-computable (partial) inverse function $g$ that maps each real number $x$ to a real number $\tilde{x}$ that is Cohen-generic over $L_{\omega^{n+1}}$. Once this is done, the result can be seen as follows: Let $P_{g}$ be an ITBM-program that computes $g$. Given $\alpha<\omega_{1}^{\text{CK}}$, pick a real number $c$ that is ITBM-recognizable, but not contained in $L_{\alpha}$ (the existence of such real numbers was proved above). Let $P$ be an ITBM-program for recognizing $c$. We claim that $\tilde{c}$ is ITBM-recognizable, but not with $\leq n$ nodes. The latter claim follows since $\tilde{c}$ is by definition Cohen-generic over $L_{\omega^{n+1}}$. To recognize $\tilde{c}$, first check whether $g(\tilde{c})$ is defined. Note that this can by clocking $P_{g}^{\tilde{c}}$ for $\omega^{m+1}$ many steps, where $m$ is the number of nodes used in $P_{g}$. If not, halt with output $0$. Otherwise, compute $g(\tilde{c})$ and return the output of $P^{g(\tilde{c})}$. Since $g$ is injective, this will give the right result.
	
	The encoding works as follows: Let $\mathcal{D}=(D_{i}:i\in\omega)$ be an ITBM-computable enumeration of the dense subsets of Cohen-forcing contained in $L_{\omega^{n+1}}$, encoded in some natural way as a real number $d$. Define a sequence $(\tilde{c}_{i}:i\in\omega)$ by letting $c_{0}=\emptyset$ and 
	$\tilde{c}_{i+1}$ the lexically minimal element $e_{i}$ of $D_{i}$ that extends $\tilde{c}_{i}$ when $i\in c$ and otherwise $\tilde{c}_{i+1}$ is the lexically first element of $D_{i}$ that properly extends $\tilde{c}_{i}$ and is incompatible with $e_{i}$. Then let $\tilde{c}:=\bigcup_{i\in\omega}\tilde{c}_{i}$. It is now easy to see that there the function $h:\mathbb{R}\times \omega\rightarrow\mathbb{R}$ that maps $(x,i)$ to the $i$th bit of $\tilde{x}$ is actually recursive in $d$.
	
	Similarly, we can recursively reconstruct $x$ from $\tilde{x}$ and $d$: Namely, given $x$, $i$ and $d$, compute a sufficiently long initial segment of $(\tilde{x}_{i}:i\in\omega)$ such that the last element fixes the $i$th bit. To compute $\tilde{x}_{i+1}$ from $\tilde{x}_{i}$, exhaustively search (in lexicographic order) through all finite partial functions from $\omega$ to $2$ that extend $\tilde{x}_{i}$, find the lexically minimal element $e_{i}$ that properly extends $\tilde{x}_{i}$ and see whether $x$ extends $e_{i}$. If that is the case, then $i\in x$, otherwise, we have $i\notin x$. 
	
	The second claim now follows, as the ITBM-computable real numbers belong to $L_{\omega^{\omega}}$. 
\end{proof}

%\section{Conclusion and Further Work}
%This leaves us with the following question, which remains unanswered for the time being:

%\begin{question}
%	Is it true that, for any $n\in\omega$, there is a non-ITBM-computable real number that ITBM-recognizable, but by a program with $n$ nodes?
%\end{question}

\end{document}